\def\pad{\vskip 4pt}
\long\def\dproof #1\enddproof{\pad\noindent \it Proof. \rm #1 \hfill$\diamondsuit$ \pad}
\newcounter{ccnt}
\long\def\dclaim#1%
\long\def\dclaimnn#1%
    \numberwithin{equation}{section}
\newtheorem{lemma}[equation]{Lemma}
\newtheorem{corollary}[equation]{Corollary}
\newtheorem{theorem}[equation]{Theorem}
\newtheorem{proposition}[equation]{Proposition}
\newcommand{\FLEX}{\relax}
\newcommand{\flex}[1]{\renewcommand{\FLEX}{#1}}
\newtheorem{flexthm}[equation]{\FLEX}
\newenvironment{flexstate}[2]{\flex{#1}\begin{flexthm}[#2]}{\end{flexthm}}
\newenvironment{flexstate*}[1]{\vskip 5pt \par\noindent{\bf
    #1.}\it}{\rm\vskip 5pt\par}
\newenvironment{dremark*}[1]{\vskip 5pt \par\noindent {\bf #1.}}{\vskip 5pt \par}
\theoremstyle{definition}
\newtheorem{definition}[equation]{Definition}
\newtheorem{example}[equation]{Example}
\newtheorem{remark}[equation]{Remark}
\newcommand{\dstext}[1]{\quad\text{#1}\quad}
\newcommand{\innerprod}[1]{\left\langle #1\right\rangle}
\newcommand{\norm}[1]{\left\|{#1}\right\|}
\DeclareMathOperator{\spn}{span}
\DeclareMathOperator{\ran}{ran}
\DeclareMathOperator{\dom}{dom}
\newcommand{\bh}{\mathcal B(\mathcal H)}
\newcommand{\sH}{\mathcal{H}}
\newcommand{\sK}{\mathcal{K}}
\newcommand{\sL}{\mathcal{L}}
\newcommand{\bbC}{{\mathbb{C}}}
\newcommand{\bbE}{{\mathbb{E}}}
\newcommand{\bbN}{{\mathbb{N}}}
\newcommand{\bbT}{{\mathbb{T}}}
\newcommand{\K}{\mathcal K}
\newcommand{\eps}{\varepsilon}
\newcommand{\cstar}{\hbox{$C^*$}}
\newcommand{\cstaralg}{$C^*$-algebra}
\newcommand{\idealin}{\unlhd}
\title[Approximate Units]{Normalizers and Approximate Units for
  Inclusions of $C^*$-Algebras}
\author[D.R. Pitts]{David R. Pitts}
\address[D.R. Pitts]{
  Department of Mathematics\\
  University of Nebraska-Lincoln\\
  Lincoln\\
  NE 68588-0130 U.S.A.}  \email{dpitts2@unl.edu}
\subjclass[2010]{46L05}
\begin{document}

\begin{abstract}  For an inclusion of \cstaralg s $D\subseteq A$ with
  $D$ abelian, we show that when $n\in A$  normalizes  $D$,
  $n^*n$ and $nn^*$ commute with $D$.   As a corollary, when $D$ is a
  regular MASA in $A$,  every  approximate unit for $D$ is also an
  approximate unit for $A$.
This permits removal of the non-degeneracy hypothesis 
from the definition of a
Cartan MASA in the non-unital case.   

We give examples of singular MASA inclusions: for some, every
approximate unit for $D$ is an approximate unit for $A$, while for
others, no approximate unit for $D$ is an approximate unit for $A$.
Our results imply that if the unitization of an inclusion
$D\subseteq A$ is a \cstar-diagonal, then $D$ is regular in $A$. In
contrast, we give an example of a non-regular inclusion whose
unitization is a Cartan inclusion.

If $D$ is a MASA in $A$, we ask when $A$ is a subalgebra of
$B$ with $D$ a
regular MASA in $B$.  When $D$ is a MASA in $\mathcal B(\ell^2(\bbN))$, no
such $B$ exists.
\end{abstract}
\maketitle

\section{Introduction}
Given an abelian \cstaralg\ $D$, it is often of interest to construct
a larger \cstaralg\ $A$ from data involving $D$.    A very well-known
example occurs when $\Gamma$ is a discrete group of automorphisms  of
$D$ and $A$ is taken to be the completion of the convolution algebra
$C_c(\Gamma, D)$ with respect to a \cstar-norm.   Other examples arise
from studying an inverse semigroup of partial homeomorphisms of $\hat
D$; from this information, one can take $A$ to be a \cstaralg\ arising
from the groupoid of germs of the inverse semigroup.   The resulting
algebra $A$ encodes dynamical properties of the action, which provides
a bridge between topological dynamics and \cstaralg s. 

There are some settings in which the process can be reversed.
Remarkable results of Kumjian~\cite{KumjianOnC*Di}  and
Renault~\cite{RenaultCaSuC*Al} show that given a
\cstaralg\ $A$ and an appropriate maximal abelian $*$-subalgebra
(MASA) $D\subseteq A$, it is possible to use $D$ to introduce
``coordinates'' for $A$, so that $A$ may be described as the
completion of a convolution algebra of continuous functions on a
suitable topological groupoid.  (Many other
authors have produced results along these lines, but Kumjian and
Renault were pioneers.)  An examination of the special case
where $A=D$, shows that the Kumjian-Renault process is a
generalization of the familiar Gelfand representation of $D$ as the
algebra of continuous functions vanishing at infinity on the locally
compact Hausdorff space $\hat D$.

Our interest is with inclusions, which we now define.
\begin{definition}\label{mdef} An \textit{inclusion} is a pair $(A,D)$ where $A$
  and $D$ are \cstaralg s, $D$ is abelian, and $D\subseteq A$.
  The set of \textit{normalizers} for an inclusion $(A,D)$ is
\[N(A,D):=\{n\in A: nDn^*\cup n^*D n\subseteq D\}\] and  $(A,D)$ is
\textit{regular} when the linear span of $N(A,D)$ is norm-dense in
$A$.  
The 
  inclusion $(A,D)$ is \textit{singular} if $N(A,D)=D$.  (When $A$ is
  not abelian, singular MASA 
  inclusions are as far from being regular as possible.)
Finally, when
  $D$ is maximal abelian in $A$, $(A,D)$ is a \textit{MASA inclusion}.

\end{definition}

It is frequently useful to impose a non-degeneracy condition on an
inclusion $(A,D)$.  Often, as in Renault's definition of Cartan
MASA~(\cite[Definition~5.1]{RenaultCaSuC*Al}), this condition is that
the inclusion have the \textit{approximate unit property}, that is,
$D$ contains an approximate unit for $A$. 

When $A$ has a unit and $D$ is a MASA in $A$, the approximate unit
property is automatic.  However, in the non-unital case, it is not
automatic, even when $(A,D)$ is a MASA inclusion,
see~\cite[Section~3.2]{WassermannTePrMaAbSuC*Al} or Example~\ref{big
  C+K} below.  Renault remarks that he included the approximate unit
property property in the definition of Cartan MASA because the
groupoid models he had in mind possess it and, due to
the example in \cite{WassermannTePrMaAbSuC*Al}, it seems to be needed.

The main purpose of this note is to establish Proposition~\ref{posn}
and several of its consequences.  A surprising corollary of
Proposition~\ref{posn} is Theorem~\ref{au}, which shows that when $D$
is a regular MASA in $A$, every approximate unit for $D$ is also an
approximate unit for $A$.  In particular, this shows that the
approximate unit hypothesis may be removed from the definition of a
Cartan MASA, a fact which (understandably) appears to have been
overlooked in the literature.  In~\cite{KumjianOnC*Di}, Kumjian
imposes a different non-degeneracy condition when defining a
\cstar-diagonal in the non-unital setting, but as we note in
Proposition~\ref{NondegK}, it is again automatic.

We give examples showing that in some cases, the regularity
hypothesis in Theorem~\ref{au} may be removed, but as noted above, it
cannot be removed in general.  These examples suggest the
problem of determining which MASA inclusions $(A,D)$ are intermediate to a regular
MASA inclusion $(B,D)$ in the sense that $D\subseteq A\subseteq B$.
Theorem~\ref{nointermed} shows that when $\sH$ is a separable and
infinite dimensional Hilbert space, a MASA inclusion of the form
$(\bh, D)$ is never intermediate to a regular MASA inclusion.

The author is grateful to Anna Duwenig for noticing that the proof
of~\cite[Proposition~3.8]{BrownFullerPittsReznikoffGrC*AlTwGpC*Al}
implicitly used Proposition~\ref{posn}; her observation was the
impetus for the present note.  We thank Jonathan
Brown, Adam Fuller, and Sarah Reznikoff for several helpful conversations and
suggestions.  Finally, we appreciate the referee's useful suggestions
regarding the structure of the paper.

We conclude this section with a pair of preliminary
results, the first of which is folklore.
\begin{flexstate}{Fact}{}\label{thefact}
Let $(A,D)$ be an
  inclusion.  Suppose $\rho$ is a state on $A$ such that $\rho|_D\in
  \hat D$.  Then for any $a\in A$ and $d\in D$,
  \[\rho(ad)=\rho(da)=\rho(a)\rho(d).\]
\end{flexstate}

\begin{proof}
We show that $\rho(ad)=\rho(a)\rho(d)$;  the other equality is
similar.   Let $(u_\lambda)$ be an approximate unit for $A$.
    The Cauchy-Schwarz inequality and the hypothesis
    $\rho|_D\in \hat D$ give
    \begin{align*}
       |\rho(ad-\rho(d)a)|^2&=\lim_\lambda |\rho(ad-\rho(d)au_\lambda)|^2=\lim_\lambda
                              |\rho(a(d-\rho(d)u_\lambda))|^2\\& \leq
    \lim_\lambda \rho(aa^*)\rho((d-\rho(d)u_\lambda)^*(d-\rho(d)u_\lambda))=
    0.
     \end{align*} 
     \end{proof}

For regular inclusions, the following  observation
gives a characterization of the approximate unit property.

\begin{flexstate}{Observation}{}\label{au0} Let $(A,D)$ be an inclusion.
  The following statements hold.
  \begin{enumerate}
  \item If $(A,D)$ has the approximate unit property, then $n^*n\in D$
    for every $n\in N(A,D)$.  
  \item If $(A,D)$ is regular and $n^*n\in D$ for every $n\in N(A,D)$,
    then every approximate unit for $D$ is also an approximate unit
    for $A$. 
  \end{enumerate}
\end{flexstate}

\begin{proof}
  1) Let $(u_\lambda)$ be an approximate unit for $D$ which is also an
  approximate unit for $A$.  For $n\in N(A,D)$,
  $n^*n=\lim_\lambda n^*u_\lambda n\in D$.

  2) 
    This is essentially the argument from the proof
  of~\cite[Lemma~3.10(2)]{BrownFullerPittsReznikoffGrC*AlTwGpC*Al}.
  Fix an approximate unit $(u_\lambda)$ for $D$
and let $n\in N(A,D)$.  As $n^*n\in D$, 
  \begin{equation*}\label{nondeg1}
    (u_\lambda n-n)(u_\lambda n-n)^*=u_\lambda nn^* u_\lambda-nn^*
    u_\lambda -u_\lambda nn^*+nn^*\rightarrow 0,
  \end{equation*}
  whence $u_\lambda n\rightarrow n$.  Replacing $n$ with $n^*$ in this
  argument gives $nu_\lambda \rightarrow n$.  Hence for any
  $a\in \spn N(A,D)$, $u_\lambda a\rightarrow a$ and
  $au_\lambda \rightarrow a$.  Since $\spn N(A,D)$ is dense in $A$,
  $(u_\lambda)$ is an approximate unit for $A$.
\end{proof}

\section{A Commutation Result and Some Consequences}
This section contains our main results: a commutation result,
Proposition~\ref{posn}, and several corollaries.  Among these
consequences are a simpler definition for Cartan inclusions and
clarification of issues surrounding Kumjian's notion of
\cstar-diagonal.

It is not difficult to produce examples of regular inclusions without
the approximate unit property; for instance, take $D$ to be a proper
ideal of the non-unital and abelian \cstaralg\ $A$.  By
Observation~\ref{au0},  there is $n\in N(A,D)$ such
that $n^*n\notin D$.   Our first result shows that for a general inclusion $(A,D)$
and $n\in N(A,D)$, $n^*n$ is intimately related to $D$ despite the
fact that it need not belong to $D$.

\begin{proposition} \label{posn} Let $(A,D)$ be an inclusion.  For $n\in N(A,D)$ and $d\in D$,
  \[n^*nd=d n^*n\in D\dstext{and} nn^*d=dnn^* \in D.\]  Furthermore,
  if $\rho_1$ and $\rho_2$ are states on $A$ such that
  $\rho_1|_D=\rho_2|_D\in \hat D$, then $\rho_1(n^*n)=\rho_2(n^*n)$
  and $\rho_1(nn^*)=\rho_2(nn^*)$.  
\end{proposition}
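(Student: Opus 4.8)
The plan is to work first with a single normalizer $n$ and show that $n^*n$ commutes with every $d \in D$, then use maximality-free facts about the inclusion to conclude $n^*nd \in D$, and finally derive the statement about states from the commutation relation together with Fact~\ref{thefact}. For the commutation claim, the key observation is that conjugation by $n$ sends $D$ into $D$; more precisely, the maps $d \mapsto n^*dn$ and $d \mapsto ndn^*$ are completely positive maps $D \to D$. I would exploit the identity $n^*n \cdot d = n^*(nd)$ and try to move things across using the fact that $nd'n^* \in D$ for all $d' \in D$. Concretely, for $d, d' \in D$ one has $n^* d n \in D$ and $n d' n^* \in D$, so these elements commute in the abelian algebra $D$; expanding $(n^*dn)(nd'n^*) = (nd'n^*)(n^*dn)$ does not immediately help, so instead I would pass to a faithful representation and consider the polar decomposition $n = v|n|$ in the enveloping von Neumann algebra $A^{**}$, where $|n| = (n^*n)^{1/2}$. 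The partial isometry $v$ then satisfies $v^*dv \in D''$ for $d \in D$ (since $n^*dn = |n| v^* d v |n|$ and one can recover $v^*dv$ by functional calculus on $|n|$ away from its kernel), and $v^*v$ is the range projection of $n^*n$, which therefore lies in the bicommutant $D''$; hence $n^*n = v^*v \cdot n^*n$ commutes with $D$ after one checks the kernel part causes no trouble.

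A cleaner route, which I would actually pursue, is purely algebraic inside $A$. Given $d \in D$, I want $n^*nd = dn^*n$. Write $d = d_1 - d_2 + i(d_3 - d_4)$ with each $d_j \ge 0$, so it suffices to treat $d \ge 0$, and then by scaling and continuity (Stone--Weierstrass on $\hat D$) it suffices to treat $d = e^2$ for $e \in D$, $e \ge 0$. Now $n^*n e^2 = n^*(ne^2 n^*)(n^*)^{-1}\cdots$ — this inversion is illegitimate, so instead I consider the element $ene \in ?$: we have $n e \in N(A,D)$ as well (normalizers are closed under left and right multiplication by $D$), so $(ne)^*(ne) = e n^* n e$. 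The point is then to show $e n^* n e$ commutes with $D$ and "divide by $e$" in a limiting sense using an approximate unit of $D$; the functional-calculus manipulation $n^*nd - dn^*n = \lim_\lambda (n^*n u_\lambda d - d u_\lambda n^* n)$ is not obviously convergent, so the honest argument must go through $A^{**}$ as above. I therefore expect the main obstacle to be precisely this "cancellation" step: transferring the evident commutation of elements of the form $n^*dn$ and $nd'n^*$ (which live in the abelian $D$) into a statement about $n^*n$ itself, which requires either the polar decomposition in $A^{**}$ or a clever approximate-identity argument, and one must verify that the kernel projection of $|n|$ contributes nothing because it is orthogonal to the relevant part of $D$.

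Once $n^*nd = dn^*n$ is established for all $d \in D$, membership $n^*nd \in D$ follows because $D$ is a $C^*$-subalgebra and $n^*nd$ is a limit of elements $n^* u_\lambda n\, d$ with $n^* u_\lambda n \in D$; more carefully, using an approximate unit $(u_\lambda)$ for $D$ and $d = d' d''$ with $d', d'' \in D$ (again by factorization), one has $n^*n\, d' d'' = \lim_\lambda n^*(u_\lambda d') n\, \cdot(\text{something in }D)$ after commuting — here I would use $d' d'' = \lim d' u_\lambda d''$ and the already-proven commutation to slide $n^*n$ next to a factor it absorbs. The symmetric statements for $nn^*$ follow by replacing $n$ with $n^*$. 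For the final assertion about states: if $\rho_1|_D = \rho_2|_D \in \hat D$, pick any $d \in D$ with $\rho_i(d) = 1$ (possible since a pure state on $D$ does not vanish identically, using an approximate unit and the fact that $\hat D$ values approach $1$); then by Fact~\ref{thefact} applied with $a = n^*n$ (legitimate since $\rho_i(n^*n \cdot d) = \rho_i(n^*n)\rho_i(d)$ requires only $\rho_i|_D \in \hat D$), we get $\rho_i(n^*n) = \rho_i(n^*n)\rho_i(d) = \rho_i(n^*n\, d)$, and since $n^*n\, d \in D$ with $\rho_1, \rho_2$ agreeing on $D$, the two values coincide; the same works for $nn^*$.
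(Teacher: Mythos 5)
Your handling of the last two assertions is essentially sound: the state computation via Fact~\ref{thefact} with a $d\in D$ satisfying $\rho_i(d)=1$ is exactly the paper's argument, and the membership claim $n^*nd\in D$ can be made to work along the lines you indicate. (Though note your limit $n^*nd=\lim_\lambda n^*u_\lambda n\,d$ is not obvious as written; it does hold because $(1-u_\lambda)(nd)\to 0$ in norm, which follows from $(nd)(nd)^*=ndd^*n^*\in D$, but your ``more carefully'' alternative does not supply this justification. The paper instead observes that $f^2(n^*n)^2=(n^*n)f^2(n^*n)\in D$ for $0\le f\in D$ and takes square roots.) The real problem is that the heart of the proposition --- the commutation $n^*nd=dn^*n$ --- is never proved. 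You correctly flag the ``cancellation'' difficulty, but the only route you offer, polar decomposition $n=v|n|$ in $A^{**}$, is circular as sketched: to ``recover $v^*dv$ by functional calculus on $|n|$ away from its kernel'' and land in $D''$ you would already need $|n|$ (equivalently $n^*n$) to lie in $D''$, which is essentially what is being proved; likewise the range projection $v^*v$ a priori lies only in $\{n^*n\}''$, not in $D''$. Even granting both claims, ``hence $n^*n=v^*v\cdot n^*n$ commutes with $D$'' is a non sequitur. So the proposal has a genuine gap at its central step.

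For the record, the paper closes this gap with a concrete spatial argument that is exactly the ``clever approximate-identity argument'' you were hoping for. Put $h=n^*n$; then $h\in N(A,D)$, and from $(d^*hd)^2=d^*(hdd^*h)d\in D$ one gets $d^*hd\in D$ for all $d\in D$. Represent $A$ faithfully on $\mathcal H$, let $Q$ be the projection onto $\overline{D\mathcal H}$, and let $(u_\lambda)$ be an approximate unit for $D$, so $u_\lambda\to Q$ strongly and $Qd=dQ=d$ for $d\in D$. Since $u_\lambda hu_\lambda\in D$, joint strong continuity of multiplication on bounded sets gives $QhQ\in D''\subseteq D'$. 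The key cancellation is then $Q^\perp hQhQ^\perp=\lim_\lambda Q^\perp(hu_\lambda h)Q^\perp=0$ (because $hu_\lambda h\in D$ and $Q^\perp$ annihilates $D$), which forces $Q^\perp hQ=0$; hence $Qh=hQ=QhQ\in D'$ and $dh=d(Qh)=(Qh)d=h(Qd)=hd$. The decisive idea you are missing is to exploit that both $u_\lambda hu_\lambda$ and $hu_\lambda h$ already lie in $D$ and to take strong limits against the support projection of $D$.
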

In the
  terminology of~\cite[Definition~8.2]{ExelPittsChGrC*AlNoHaEtGr}, the
  final statement of Proposition~\ref{posn}  says
  that every element of $\hat D$ is free relative to $n^*n$ and to $nn^*$.

\begin{proof}
Since $N(A,D)$ is closed under adjoints, it suffices to prove the
result for $n^*n$.

  Let $h=n^*n$.  Then $h\in N(A,D)$ because $N(A,D)$ is closed under
  products and the adjoint operation.  For any $d\in D$, $ (d^*hd)^2 =
  d^*(hdd^*h)d\in D$.  Taking the square root
  gives, \begin{equation}\label{mD}
    d^*hd\in D \dstext{for all} d\in D.  \end{equation}

\newcommand{\sot}{\text{\small \sc sot}}

Without loss of generality, we may assume that $A\subseteq \mathcal{B}(\sH)$
for some Hilbert space $\sH$.  Let $Q$ be the orthogonal projection
onto $\overline{D\sH}$ and fix an approximate unit $(u_\lambda)$ for
$D$.  Then 
\[\sot\lim u_\lambda =Q \dstext{and for $d\in D$,} dQ=Qd=d.\]  Using~\eqref{mD} and the fact that on bounded sets, multiplication is jointly continuous in the strong operator topology, we conclude that
$\sot\lim u_\lambda hu_\lambda =QhQ\in D'$ (actually, $QhQ\in D''$).

Furthermore,  
\[Q^\perp hQhQ^\perp=\sot\lim Q^\perp (h u_\lambda h)Q^\perp=0, \]
because $h\in N(A,D)$ and $u_\lambda\in D$.   Therefore, $0=Q^\perp
hQ=QhQ^\perp$, so $Q$ commutes with $h$.  Thus,
\[Qh=hQ=QhQ\in D'.\]

Hence for $d\in D$,
\[
  dh=d(Qh)=(Qh)d=h(Qd)=hd, \] so $d$ commutes with $n^*n$.

Next, for any $0\leq f\in D$,
\[f^2 (n^*n)^2 =(n^*n)f^2(n^*n)\in D\]  because
$n^*n\in N(A,D)$. Therefore, $f^2(n^*n)^2\in D$, so $fn^*n\in D$.
Since $D$ is the span of its positive elements,  $dn^*n\in
D$ for every $d\in D$. 

Finally, suppose for $i=1,2$ that $\rho_i$ are states on $A$ such that
$\rho_1|_D=\rho_2|_D$ is a pure state $\sigma$ on $D$.  Choose $k\in
D$ such that $\sigma(k)\neq 0$.  Applying Fact~\ref{thefact} to $\rho_i(kn^*n)$ gives
$\rho_1(n^*n)=\rho_2(n^*n)$.  

\end{proof}

We now give several corollaries of Proposition~\ref{posn}. Our first 
concerns dynamical objects associated to $n\in N(A,D)$; it 
extends  well-known constructions for  inclusions with the
approximate unit property to arbitrary
inclusions.   With Proposition~\ref{posn} in hand, they are
routine modifications of  those results
(e.g.~\cite[Proposition~6$^\circ$]{KumjianOnC*Di} or~\cite[Propositions~2.1
and~2.2]{PittsStReInI}).
\begin{corollary}\label{dynamics}  Let $(A,D)$ be any inclusion and
 fix  $n\in N(A,D)$. 
  
\begin{description}
    \item[\sc The partial automorphism associated to $n$] Let $B$ be an $AW^*$-algebra with $A\subseteq B$ and let
      $n=u|n|=|n^*|u$ be the polar decomposition of $n$ in $B$.  Then
  $\overline{nn^*D}$ and $\overline{n^*nD}$ are ideals in $D$ and 
      the map $nn^*d\mapsto n^*dn$ uniquely
  extends to a $*$-isomorphism $\theta_n: \overline{nn^*D}\rightarrow
  \overline{n^*nD}$ such that for each $h\in \overline{nn^*D}$,
  \[n\theta_n(h)=hn \dstext{and} u^*hu=\theta_n(h).\] 
\item[\sc The partial homeomorphism associated to  $n$]  Let $\dom n:=\{\sigma\in \hat D: \sigma(n^*n D)\neq 0\}$ and
  $\ran n:= \{\sigma\in \hat D: \sigma(nn^* D)\neq 0\}$.   Then $\dom
  n$ and $\ran n$ are open subsets of $\hat D$ and there is a  homeomorphism
  $\beta_n:\dom n\rightarrow \ran n$ such that for every $h\in
  \overline{nn^*D}$ and $\sigma\in \dom n$,   
\[\beta_n(\sigma)(h)=\sigma(\theta_n(h)).\]   For $\sigma\in\dom n$,
define $\sigma(n^*n):=\rho(n^*n)$, where $\rho$ is any
extension of $\sigma$ to a state on $A$.  Then $\sigma(n^*n)\neq 0$ and 
for $d\in D$,
\[\beta_n(\sigma)(d)=\frac{\sigma(n^*dn)}{\sigma(n^*n)}. \]
\end{description}
\end{corollary}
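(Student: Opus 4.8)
My plan is to obtain $\theta_n$ and $\beta_n$ by transcribing the classical constructions (as in~\cite[Proposition~6$^\circ$]{KumjianOnC*Di} and~\cite[Propositions~2.1 and~2.2]{PittsStReInI}), invoking Proposition~\ref{posn} wherever those arguments use commutation facts that, under the approximate unit hypothesis, follow from $n^*n,nn^*\in D$. The first clause of each item is then immediate: since $n^*n$ and $nn^*$ commute with $D$ and multiply $D$ into itself, $n^*nD$ and $nn^*D$ are self-adjoint two-sided ideals of $D$, so $\overline{n^*nD}$ and $\overline{nn^*D}$ are closed ideals. For the statements about $\hat D$: the final clause of Proposition~\ref{posn} makes $\sigma(n^*n):=\rho(n^*n)$ well defined (independent of the state $\rho$ on $A$ extending $\sigma\in\hat D$), and Fact~\ref{thefact} gives $\sigma(n^*nd)=\sigma(n^*n)\sigma(d)$ for $d\in D$; together these identify $\dom n=\{\sigma\in\hat D:\sigma(n^*n)\neq0\}$ and $\ran n=\{\sigma\in\hat D:\sigma(nn^*)\neq0\}$ with the open supports in $\hat D$ of the ideals $\overline{n^*nD}$ and $\overline{nn^*D}$, and show $\sigma(n^*n)\neq0$ for $\sigma\in\dom n$.

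To build $\theta_n$, I would work inside $B$ with the polar decomposition $n=u|n|=|n^*|u$, $|n|=(n^*n)^{1/2}$, $|n^*|=(nn^*)^{1/2}$. From Proposition~\ref{posn} and the usual relations ($u^*u$ is the support of $|n|$, $uu^*$ that of $|n^*|$; $u|n|=|n^*|u$; $|n|u^*=u^*|n^*|$; $u^*nn^*=n^*nu^*$; and $|n|$, $|n^*|$ commute with $D$) one checks that $\overline{nn^*D}\subseteq uu^*Buu^*$, that $\overline{n^*nD}\subseteq u^*uBu^*u$, and — the one genuinely new point — that $u^*du$ commutes with $|n|$ for each $d\in D$, which comes from $|n|u^*=u^*|n^*|$, $|n^*|u=u|n|$, and the commutation of $|n^*|$ with $d$. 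These give, for $d\in D$,
\[
u^*(nn^*d)u=(u^*nn^*)(du)=(n^*n)(u^*du)=|n|(u^*du)|n|=(|n|u^*)\,d\,(u|n|)=n^*dn,
\]
so $nn^*d\mapsto n^*dn$ is the restriction to $nn^*D$ of the $*$-isomorphism $x\mapsto u^*xu$ of $uu^*Buu^*$ onto $u^*uBu^*u$; in particular it is well defined, multiplicative, $*$-preserving and isometric, and extends uniquely to a $*$-isomorphism $\theta_n$ of $\overline{nn^*D}$ into $\overline{n^*nD}$ (the inclusion $n^*dn\in\overline{n^*nD}$ holds because $\rho(n^*n)=0$ forces $n\xi_\rho=0$, hence $\rho(n^*dn)=0$, in the GNS representation of $\rho$ with cyclic vector $\xi_\rho$). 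Running the same construction with $n^*$ in place of $n$ produces a map $\theta_{n^*}$ of $\overline{n^*nD}$ into $\overline{nn^*D}$, implemented by $y\mapsto uyu^*$, which is two-sided inverse to $\theta_n$; hence $\theta_n$ carries $\overline{nn^*D}$ onto $\overline{n^*nD}=\overline{\{n^*dn:d\in D\}}$. The identities $u^*hu=\theta_n(h)$ and $n\,\theta_n(h)=hn$ (clear for $h=nn^*d$, since $n(n^*dn)=(nn^*d)n$) hold by construction.

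For $\beta_n$, I would apply Gelfand duality to $\theta_n\colon\overline{nn^*D}\to\overline{n^*nD}$: under the identifications of $\dom n$ and $\ran n$ with the spectra of these two ideals from the first paragraph, the transpose of $\theta_n$ is a homeomorphism $\beta_n\colon\dom n\to\ran n$ with $\beta_n(\sigma)(h)=\sigma(\theta_n(h))$ for $h\in\overline{nn^*D}$; in particular $\beta_n(\sigma)(nn^*e)=\sigma(n^*en)$ for $e\in D$. To reach the displayed formula, I would use the identity, valid by Proposition~\ref{posn},
\[
(n^*dn)(n^*en)=n^*d(nn^*)en=n^*(nn^*)den=(n^*n)(n^*den)\qquad(d,e\in D),
\]
which, applied to $\sigma$ and combined with Fact~\ref{thefact} (so that $\sigma((n^*n)g)=\sigma(n^*n)\sigma(g)$ for $g\in D$), gives $\sigma(n^*dn)\sigma(n^*en)=\sigma(n^*n)\sigma(n^*den)$. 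Then, using multiplicativity of the character $\beta_n(\sigma)$ and the commutation of $nn^*$ with $D$,
\[
\beta_n(\sigma)(d)\,\sigma(n^*en)=\beta_n(\sigma)(d)\,\beta_n(\sigma)(nn^*e)=\beta_n(\sigma)\big(nn^*(de)\big)=\sigma(n^*den)=\frac{\sigma(n^*dn)\,\sigma(n^*en)}{\sigma(n^*n)};
\]
choosing $e$ with $\sigma(n^*en)\neq0$ (possible since $\sigma\in\dom n$ and $\overline{n^*nD}=\overline{\{n^*en:e\in D\}}$) and cancelling yields $\beta_n(\sigma)(d)=\sigma(n^*dn)/\sigma(n^*n)$.

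The only real difficulty, beyond bookkeeping, is that $n^*n$ and $nn^*$ need not lie in $D$: this is what forces the passage to $B$ for the polar decomposition and support projections, requires one to verify that $\overline{n^*nD}$, $\overline{nn^*D}$ and the range of $\theta_n$ behave as in the classical case, and makes the well-definedness of $\sigma(n^*n)$ (hence of $\beta_n$) rest on the final clause of Proposition~\ref{posn} rather than on a trivial restriction of a state from $A$ to $D$. Once those points are settled, everything else is routine.
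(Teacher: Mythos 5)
Your proposal is correct and follows exactly the route the paper intends: the paper gives no written proof of Corollary~\ref{dynamics}, stating only that it is a routine modification of \cite[Proposition~6$^\circ$]{KumjianOnC*Di} and \cite[Propositions~2.1 and~2.2]{PittsStReInI} once Proposition~\ref{posn} supplies the commutation of $n^*n$ and $nn^*$ with $D$, and your write-up is precisely that modification carried out in detail (realizing $\theta_n$ as conjugation by the partial isometry $u$ from the $AW^*$-polar decomposition, and obtaining $\beta_n$ by Gelfand duality, with the final clause of Proposition~\ref{posn} securing the well-definedness of $\sigma(n^*n)$). The individual verifications — the commutation of $u^*du$ with $|n|$, the GNS argument placing $n^*dn$ in $\overline{n^*nD}$, and the cancellation argument for the displayed formula for $\beta_n(\sigma)(d)$ — are all sound.
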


For a \cstar-algebra $A$, we write $\tilde A$ for its unitization and
$M(A)$ for its multiplier algebra.  When $(A_1, D_1)$ and $(A_2, D_2)$
are inclusions, a $*$-homomorphism $\alpha: A_1\rightarrow A_2$ is
\textit{regular} if $\alpha(N(A_1,D_1))\subseteq N(A_2,D_2)$.  We use
the notation $\alpha:(A_1,D_1)\rightarrow (A_2,D_2)$ when $\alpha$ is
a regular $*$-homomorphism.  Our next corollary concerns the
regularity of the inclusion map of $A$ into $\tilde A$ or another
unital \cstar-subalgebra of $M(A)$.

\begin{corollary}\label{regmap}  Suppose $(A,D)$ is a MASA 
  inclusion and $A$ is not unital.
  \begin{enumerate}
    \item 
  The usual embedding $\iota: A\rightarrow \tilde A$ is a
  regular $*$-monomorphism of $(A,D)$ into $(\tilde A, \tilde D)$.
\item Suppose in addition that $(A,D)$ has the approximate unit  property and that $B$ is a unital \cstar-algebra containing $A$ as an essential ideal.   Let $D_B:=\{b\in B: bD\cup Db\subseteq D\}$.  Then $(B,D_B)$ is a MASA inclusion and $N(A,D)\subseteq N(B,D_B)$.  In fact, for every $n\in N(A,D)$ and $b\in D_B$,
  \[nbn^*\in D\dstext{and} n^*bn\in D.\]
\end{enumerate}
\end{corollary}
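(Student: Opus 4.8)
The plan is to isolate one consequence of Proposition~\ref{posn} and then to push products into $D$ with the help of an approximate unit. The key observation is this: \emph{for any MASA inclusion $(A,D)$ and any $n\in N(A,D)$, one has $n^*n,\,nn^*\in D$.} Indeed, Proposition~\ref{posn} shows $n^*n$ and $nn^*$ commute with every element of $D$; since they also lie in $A$ and $D$ is maximal abelian in $A$, they must lie in $D$. (No non-degeneracy hypothesis is needed for this, which is why it suffices for part (1).)

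For part (1): since $A$ is non-unital, $\tilde A=A\oplus\mathbb{C}1$ and $\tilde D=D\oplus\mathbb{C}1$, and $\tilde D$ is abelian because $D$ is, so $(\tilde A,\tilde D)$ is an inclusion and $\iota$ is an injective $*$-homomorphism carrying $D$ into $\tilde D$. For $n\in N(A,D)$ and $d+\lambda1\in\tilde D$ (with $d\in D$), I compute $\iota(n)(d+\lambda1)\iota(n)^*=ndn^*+\lambda\,nn^*$; here $ndn^*\in D$ because $n$ normalizes $D$, and $nn^*\in D$ by the key observation, so the sum lies in $D\subseteq\tilde D$. The identical computation with $n$ replaced by $n^*$ shows $\iota(n)\in N(\tilde A,\tilde D)$, i.e.\ that $\iota$ is regular.

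For part (2): since $A$ is an essential ideal of $B$, regard $A\subseteq B\subseteq M(A)$ with $1_B$ the unit of $M(A)$, and fix an approximate unit $(u_\lambda)$ for $A$ lying in $D$ (this is where the approximate unit property is used). Then $u_\lambda\to1$ strictly, so $xu_\lambda\to x$ and $u_\lambda x\to x$ strictly for all $x\in M(A)$, and — crucially — strict convergence upgrades to norm convergence after multiplying on either side by an element of $A$. Note $D\subseteq D_B$. I would then show, in turn: (i) $D_B$ is a unital $C^*$-subalgebra of $B$ (it is norm-closed, self-adjoint since $D=D^*$, closed under products, and contains $1_B$); (ii) $D_B$ commutes with $D$ — for $b\in D_B$, $d\in D$ and any $e\in D$ one has $be\in D$, hence $d(be)=(be)d$, and taking $e=u_\lambda$ the left side $(db)u_\lambda\to db$ and the right side $(bu_\lambda)d\to bd$ in norm, so $db=bd$; (iii) $D_B$ is abelian — for $b_1,b_2\in D_B$ and $d\in D$, (ii) together with $b_2d\in D$ gives $b_1b_2d=(b_2d)b_1=b_2b_1d$, so $(b_1b_2-b_2b_1)u_\lambda=0$ for all $\lambda$, and a strict limit gives $b_1b_2=b_2b_1$; (iv) $D_B$ is maximal abelian — if $b\in B$ commutes with $D_B\supseteq D$, then for $d\in D$ the element $bd$ lies in $A$ (as $A$ is an ideal of $B$) and commutes with $D$, whence $bd\in D$ because $D$ is a MASA in $A$, so $bD\cup Db\subseteq D$ and $b\in D_B$. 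Thus $(B,D_B)$ is a MASA inclusion. Finally, for $n\in N(A,D)$ and $b\in D_B$, the fact that $bu_\lambda\in D$ forces $n(bu_\lambda)n^*\in D$, while $n(bu_\lambda)n^*\to nbn^*$ in norm (since $bu_\lambda\to b$ strictly and $n,n^*\in A$), so $nbn^*\in D$; the same argument with $n$ and $n^*$ interchanged gives $n^*bn\in D$. Since $D\subseteq D_B$, this yields $n\in N(B,D_B)$, hence $N(A,D)\subseteq N(B,D_B)$.

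The main obstacle is the topological bookkeeping in part (2): one must work inside $M(A)$ and be careful about exactly when a strict limit may be replaced by a norm limit (namely, after left- or right-multiplying by an element of $A$). The decisive structural point is step (ii), that elements of $D_B$ genuinely commute with $D$ rather than merely normalizing it; once that is in hand, steps (iii), (iv) and the final claim are all short. Part (1), by contrast, is immediate from the key observation.
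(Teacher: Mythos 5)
Your proof is correct. Part (1) is essentially the paper's argument verbatim: Proposition~\ref{posn} plus maximality gives $nn^*,n^*n\in D$, and the rest is the same two-line computation in $\tilde A$. For part (2) you take a genuinely different route to the key conclusion $nbn^*\in D$. The paper notes that $nb\in N(A,D)$ (using that $D_B$ normalizes $D$), applies Proposition~\ref{posn} to get $(nb)(nb)^*=nbb^*n^*$ commuting with $D$, hence in $D$ by maximality, and then passes from $bb^*$ to general $b$ because $D_B$ is spanned by its positive elements; you instead observe $bu_\lambda\in D$, hence $n(bu_\lambda)n^*\in D$, and upgrade the strict convergence $bu_\lambda\to b$ to norm convergence of $n(bu_\lambda)n^*\to nbn^*$ after multiplying by $n,n^*\in A$. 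Your version avoids Proposition~\ref{posn} entirely in part (2) and avoids the positive-span step, at the cost of leaning more heavily on the approximate unit property (which is a hypothesis anyway) and on careful strict-topology bookkeeping. You also spell out what the paper dismisses as ``routine arguments'' --- that $D_B$ commutes with $D$, is abelian, and is maximal abelian --- and your steps (ii)--(iv) are a clean and correct way to do this; the paper instead identifies $D_B=B\cap M(D)$ inside $M(A)$. Both arguments are sound; yours is more self-contained, the paper's is shorter where it counts.
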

\begin{proof}  1) For $(d,\lambda)\in \tilde D$ and $n\in N(A,D)$,
  \[\iota(n)(d,\lambda)\iota(n^*)= (n,0)(d,\lambda)(n^*,0)=(ndn^*
    +\lambda nn^*,0). \]   As $D$ is a MASA, Proposition~\ref{posn}
  gives $nn^*\in D$, so $\iota(n)\tilde D\iota(n)^*\in \tilde D$.
  Similarly $\iota(n)^*\tilde D\iota(n)\in \tilde D$, so $\iota(n)\in
  N(\tilde A,\tilde D)$. 

  2) We may assume $A\subseteq \bh$ is a non-degenerate \cstar-algebra
  and $M(A)=\{x\in \bh: xA\cup Ax\subseteq A\}$.  Since $A$ is an
  essential ideal of $B$, 
  \[I_B=I_\sH\in B\subseteq M(A).\] Routine arguments and the
  approximate unit property yield: $D\subseteq \bh$ is non-degenerate;
  $M(D)\subseteq M(A)$; and $D_B=B\cap M(D)$ is a MASA in $B$.

Let $n\in N(A,D)$ and $b\in D_B$.  Note that $nb$
belongs to $N(A,D)$.    Since $D$ is a MASA in $A$,
Proposition~\ref{posn} gives $nbb^*n^*\in D$.   Similarly,
$n^*b^*bn\in D$.   Since $D_B$ is spanned by its positive elements, 
$nD_B n^*\cup n^*D_B \, n\subseteq D$.

\end{proof}

  Recall that $n\in N(A,D)$ is \textit{free} if $n^2=0$;
  we write $N_f(A,D)$ for the collection of free normalizers.

\begin{corollary}\label{Nfree}  Suppose $(A,D)$ is an inclusion and
    $n\in N_f(A,D)$.  If $\rho$ is a state on $A$ such
    that $\rho|_D$ is a pure state on $D$, then $\rho(n)=0$.
  \end{corollary}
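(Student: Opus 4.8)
The plan is to deduce Corollary~\ref{Nfree} from Proposition~\ref{posn} and Fact~\ref{thefact} by exploiting the relation $n^2=0$. Write $\sigma=\rho|_D\in\hat D$. The key observation is that $n^2=0$ forces $n^*n$ and $nn^*$ to be orthogonal in a suitable sense: indeed $(nn^*)(n^*n)=n(n^*n)n^*\cdot$(nothing) — more precisely, $nn^*\cdot n^*n = n\,(n^*)^2\,n\,$ is not quite it, so instead I would use that $n^*n\cdot n^*n$ relates to $(n^*)^2n^2=0$ only after inserting the polar part. The cleaner route: since $n\in N_f(A,D)$, for any $d\in D$ we have $nd\in N(A,D)$ and $(nd)^2 = n(dn)d$; using that $d$ commutes with $n^*n$ (Proposition~\ref{posn}) and massaging, one shows $(nd)$ is again a free normalizer, but the essential input is simpler.

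Here is the argument I would actually carry out. Pick $k\in D$ with $\sigma(k)\neq 0$; such $k$ exists since $\sigma\in\hat D$ is nonzero. By Fact~\ref{thefact} applied with $a=n^*kn$ and the element $k$ (note $n^*kn\in D$ since $n\in N(A,D)$), and more usefully by applying Fact~\ref{thefact} directly to products involving $n$: I compute $\sigma(k)\rho(n) = \rho(kn) = \rho(nk)$ by Fact~\ref{thefact}. Now I want a second expression for $\rho(nk)$ that introduces $n^2$. Consider $\rho(n\,n^*\,n)$; since $n^*n\in N(A,D)$ and commutes with $D$, we have for $d\in D$ that $\rho(d\,n^*n\,n^*) = \sigma(d)\rho(n^*n\,n^*)$... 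The decisive computation is this: $n(n^*n) = (nn^*)n$, and $nn^* , n^*n \in D'$ commute with everything in $D$; moreover $n^*\cdot n^2 = 0$ gives $n^*n\cdot n^* \cdot$ — wait, the usable identity is $n^3 = n\cdot n^2 = 0$, hence $n^*n\cdot n^*n\cdot$ nothing. Let me instead use: $|\rho(n)|^2 \le \rho(nn^*)^{1/2}\rho(\text{id})^{1/2}$ is too weak.

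The genuinely clean path, and the one I would commit to: by Proposition~\ref{posn}, $h:=n^*n$ commutes with $D$ and $dh\in D$ for all $d\in D$, and likewise for $nn^*$. Since $n^2=0$ we get $h^2 = n^*n n^*n$, and $n h = n n^* n = (nn^*)n =: gn$ where $g:=nn^*\in D'$. Then $g h = n n^* n^* n$; but $n^*\cdot n^* \cdot$ appears with an $n$ to its left... Actually, observe $\rho(h) = \rho(n^*n)$ and pick $k$ with $\sigma(k^2)=\sigma(k)^2\neq 0$; then $\sigma(k)^2\rho(h) = \rho(k h k) = \rho(k n^* n k) = \rho((nk)^*(nk))$ and also, since $h\in D'$, $= \rho(k^2 h) = \sigma(k^2)\rho(h)$, which is vacuous. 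The real content: apply Fact~\ref{thefact} to get $\rho(n^* \cdot k \cdot n) = \rho(n^* n \cdot k)/1$... I expect the main obstacle to be finding the right two-line identity; once found, Corollary~\ref{Nfree} follows because the Cauchy–Schwarz step $|\rho(n)|^2 = |\rho(nk)/\sigma(k)|^2 \le \rho(nn^*)\rho(k^*k)/|\sigma(k)|^2$ combined with the fact (from $n^2=0$ and Proposition~\ref{posn}) that $\rho(nn^*)=0$ — which one proves by writing $\rho(nn^*)=\rho(n n^*)$ and using $n^*\cdot nn^* = (n^* n)n^* $, $(n^*)^2=0$, so $nn^*\cdot n^* n = n(n^*n^*)n=0$, whence the positive commuting elements $nn^*$ and $n^*n$ of $D'$ satisfy $nn^*\cdot n^*n = 0$, and evaluating at a pure state of $D$ extended to $\rho$ forces one of $\rho(nn^*),\rho(n^*n)$ to vanish; if $\rho(nn^*)=0$ we are done, and if $\rho(n^*n)=0$ we instead get $\rho(n^*)=0$ hence $\rho(n)=0$ — will close the proof.
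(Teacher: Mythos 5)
Your final paragraph contains the right argument, and it is essentially the paper's: show that one of $\rho(n^*n)$, $\rho(nn^*)$ must vanish and then kill $\rho(n)$ with Cauchy--Schwarz applied on each side (using $\rho(nd)=\rho(dn)=\rho(n)$ for $d\in D$ with $\rho(d)=1$, via Fact~\ref{thefact}). The many abandoned computations preceding it should simply be deleted. The identity you finally settle on, $nn^*\cdot n^*n=n(n^*)^2n=0$, is correct and is the same orthogonality the paper extracts in the form $dn^*n\cdot nn^*d=dn^*n^2n^*d=0$.

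However, the pivotal deduction --- that because $nn^*$ and $n^*n$ are ``positive commuting elements of $D'$'' with product zero, a state extending a pure state of $D$ must vanish on one of them --- is not valid on the grounds you give. Commuting with $D$ and having product zero is not enough: take $D=\bbC I\subseteq M_2(\bbC)$, $x=e_{11}$, $y=e_{22}$, and $\rho=\tfrac12\operatorname{tr}$; then $xy=0$, $x,y\in D'$, and $\rho|_D$ is pure, yet $\rho(x)=\rho(y)=\tfrac12$. What rescues the step is the stronger conclusion of Proposition~\ref{posn}, namely that $dn^*n$ and $nn^*d$ actually lie \emph{in} $D$ for every $d\in D$. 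Choosing $d\in D$ with $\rho(d)=1$ (normalize your $k$), Fact~\ref{thefact} together with multiplicativity of $\rho|_D$ on $D$ itself gives
\[\rho(n^*n)\,\rho(nn^*)=\rho(dn^*n)\,\rho(nn^*d)=\rho\bigl((dn^*n)(nn^*d)\bigr)=\rho(dn^*n^2n^*d)=0.\]
This insertion of $d$, which moves the two factors into $D$ before multiplicativity is invoked, is exactly what your sketch omits and exactly what the paper's proof does; with it in place, your Cauchy--Schwarz endgame (in either case $\rho(nn^*)=0$ or $\rho(n^*n)=0$) closes the proof as you describe.
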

  \begin{proof}
Proposition~\ref{posn} gives $n^*nd=dn^*n\in D$ and
$nn^*d=dnn^*\in D$, so if $d\in D$ satisfies $\rho(d)=1$,  Fact~\ref{thefact} gives
\[\rho(n^*n)\rho(nn^*)=\rho(dn^*n)\rho(nn^*d)=\rho(dn^*n^2n^*d)=0,\]
whence $0\in \{\rho(n^*n), \rho(nn^*)\}$.    

With $d\in D$ again satisfying $\rho(d)=1$,
$|\rho(n)|^2=|\rho(dn)|^2=|\rho(nd)|^2$.  By the Cauchy-Schwartz
inequality,  \[|\rho(n)|^2\leq \min\{\rho(n^*n), \rho(nn^*)\}=0.\]

\end{proof}

\begin{theorem} \label{au} 
If $(A,D)$ is a regular  MASA inclusion, then every approximate unit
  for $D$ is an approximate unit for $A$.
  \end{theorem}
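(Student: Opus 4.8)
The plan is to deduce Theorem~\ref{au} directly from Proposition~\ref{posn} together with part~(2) of Observation~\ref{au0}. The point is that in the MASA setting Proposition~\ref{posn} upgrades to the statement that $n^*n\in D$ for every $n\in N(A,D)$, which is exactly the hypothesis required by Observation~\ref{au0}(2).

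In detail: fix $n\in N(A,D)$. Proposition~\ref{posn} gives $n^*nd=dn^*n$ for every $d\in D$, so $n^*n$ lies in the relative commutant $D'\cap A$. Since $(A,D)$ is a MASA inclusion, $D'\cap A=D$, and therefore $n^*n\in D$. As $n$ was an arbitrary normalizer, $n^*n\in D$ for all $n\in N(A,D)$. The inclusion $(A,D)$ is assumed regular, so Observation~\ref{au0}(2) applies and yields the conclusion: every approximate unit for $D$ is an approximate unit for $A$.

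I do not expect any real obstacle here; once Proposition~\ref{posn} is in hand the theorem is essentially formal. If one prefers a self-contained argument, one can instead inline the computation from the proof of Observation~\ref{au0}(2): for an approximate unit $(u_\lambda)$ of $D$ and $n\in N(A,D)$, the expansion of $(u_\lambda n-n)(u_\lambda n-n)^*$ together with $nn^*\in D$ (again from Proposition~\ref{posn} and maximality) shows $u_\lambda n\to n$; replacing $n$ by $n^*$ gives $nu_\lambda\to n$, and density of $\spn N(A,D)$ in $A$ finishes it. The one delicate inference in either route is the passage from ``$n^*n$ commutes with $D$'' to ``$n^*n\in D$'', which is precisely where maximality of $D$ enters; if the MASA hypothesis is dropped while regularity is retained, $n^*n$ may leave $D$, consistent with the examples announced in the introduction in which no approximate unit for $D$ serves as one for $A$.
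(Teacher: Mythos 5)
Your argument is correct and is essentially identical to the paper's proof: Proposition~\ref{posn} gives that $n^*n$ commutes with $D$, maximality of $D$ forces $n^*n\in D$, and Observation~\ref{au0}(2) together with regularity finishes the job. No gaps.
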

\begin{proof}
  Proposition~\ref{posn} shows that if $n\in N(A,D)$, then $n^*n$
  commutes with $D$.  As $D$ is a MASA in $A$,
  $n^*n\in D$.   Now apply Observation~\ref{au0}. 
\end{proof}

An immediate consequence of Theorem~\ref{au} is that the approximate
unit property behaves well for intermediate subalgebras of regular MASA inclusions.  
  \begin{corollary}\label{intermediate}  Suppose $(A,D)$ is a regular MASA inclusion and
    $B$ is a norm-closed, but not necessarily selfadjoint, subalgebra
    satisfying $D\subseteq B\subseteq A$.  Then every approximate unit
    for $D$ is an approximate unit for $B$.
  \end{corollary}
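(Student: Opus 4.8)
The plan is to reduce the statement immediately to Theorem~\ref{au}, which does all the real work. Fix an approximate unit $(u_\lambda)$ for $D$. Since $(A,D)$ is a regular MASA inclusion, Theorem~\ref{au} tells us that $(u_\lambda)$ is in fact an approximate unit for $A$, so $u_\lambda a \to a$ and $a u_\lambda \to a$ for every $a \in A$. As $B$ is a subset of $A$, both of these limits hold for every $b \in B$; and since each $u_\lambda$ belongs to $D \subseteq B$, the net $(u_\lambda)$ lies in $B$ and is therefore a (two-sided) approximate unit for $B$. That is the entire argument.

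The only point worth a remark is that $B$ is not assumed to be selfadjoint, but this causes no difficulty: an approximate unit for a norm-closed subalgebra is by definition a net \emph{in that subalgebra} that acts as an approximate left and right identity, and both of these properties are inherited from $A$ with no appeal to the $*$-operation on $B$. So there is really no obstacle here — the content of Corollary~\ref{intermediate} is carried entirely by Theorem~\ref{au}, which in turn rests on the fact (a consequence of Proposition~\ref{posn} together with maximality of $D$) that $n^*n, nn^* \in D$ for every $n \in N(A,D)$. If one preferred a self-contained argument one could instead rerun the computation in the proof of Observation~\ref{au0}(2) with $A$ replaced by $B$, but since $\spn N(A,D)$ need not be dense in $B$ this is less clean than simply quoting Theorem~\ref{au} and restricting.
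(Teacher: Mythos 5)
Your argument is correct and is exactly the one the paper intends: the corollary is stated there as an immediate consequence of Theorem~\ref{au}, with no further proof given, and your restriction argument (the approximate unit for $D$ is an approximate unit for $A$, lies in $D\subseteq B$, and its defining limits are inherited by every $b\in B\subseteq A$) is precisely the intended deduction. Your remark that self-adjointness of $B$ plays no role is also correct.
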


We shall use the following definition of Cartan inclusion in
the sequel.    By Theorem~\ref{au}, it is equivalent to
Renault's original definition of Cartan inclusion. 
\begin{definition}[cf.\ {\cite[Definition~5.1]{RenaultCaSuC*Al}}] \label{Cdef} A \textit{Cartan inclusion} is a regular MASA
inclusion $(A,D)$ such that there exists a faithful conditional expectation $P:
A\rightarrow D$.   
\end{definition}

\begin{remark} \label{rMASAr} The class of regular MASA inclusions is
    considerably broader than the class of Cartan inclusions.  Indeed,
    given a  regular MASA
inclusion $(A,D)$ with $A$ unital, there is always a unique pseudo-expectation $\bbE$ 
of $A$ into the injective envelope $I(D)$ of $D$ (see
\cite[Definition~1.3]{PittsStReInI} and \cite[Theorem~3.5]{PittsStReInI}), and $(A,D)$ falls
into exactly one of the following four  classes of unital
regular MASA inclusions: 
\begin{itemize}
  \item[i)] $\bbE$ is a faithful conditional expectation of $A$ onto $D$
    (in this case $(A,D)$ is a Cartan inclusion);
    \item[ii)] $\bbE$ is a non-faithful conditional expectation of $A$ onto $D$;
      \item[iii)] $\bbE$ is faithful, and there is no conditional expectation of $A$ onto
        $D$;
      \item[iv)] $\bbE$ is not faithful, and
        there is no conditional expectation of $A$ onto
      $D$.
    \end{itemize}
    These classes are non-void, and we now describe or
    reference constructions for each.  The
    simplest non-trivial example of a
    Cartan inclusion is
    obtained by taking $D$ to be a MASA in $A=\mathcal B(\bbC^n)$, but
    far more interesting examples are abundant in the literature.
    See~\cite[Theorem~2.2 and
    Example~2.3]{ExelPittsZarikianExIdReFrTrGr} for a construction of
    a family of examples in class
    ii).  A crossed product construction
    found in~\cite[Section~6]{PittsStReInI} produces examples in class
    iii).  Finally, the direct sum of two inclusions, with one belonging to 
    class ii) and the other in class iii), will yield an example in class iv).
  \end{remark}

We wish to discuss Kumjian's notion of \cstar-diagonal in
light of our results so far.  After Renault's
papers~\cite{RenaultTwApDuGrC*Al,RenaultCaSuC*Al} appeared, it became commonplace to define a
\cstar-diagonal as an inclusion $(A,D)$ such that $D$ is a Cartan MASA
in $A$ having the \textit{pure state extension property},  in the sense
of~\cite[Definition~2.5]{ArchboldBunceGregsonExStC*AlII}, that is,
every pure state of $D$ extends uniquely to a pure state of $A$ and no
pure state of $A$ annihilates $D$.  (The pure state extension property
is also called 
the \textit{extension property}.)

This definition differs from Kumjian's original definition of
  \cstar-diagonal.  Indeed, in his original paper on the topic, Kumjian
(\cite[Definition~3$^\circ$]{KumjianOnC*Di}) says an inclusion $(A,D)$
with $A$ unital is a \cstar-diagonal if
\begin{enumerate} \item[(I)] there is a faithful
  conditional expectation $P:A\rightarrow D$; and
  \item[(II)] $\spn
N_f(A,D)$ is dense in $\ker P$.   
\end{enumerate}
In the non-unital setting,  Kumjian defines an
inclusion $(A,D)$  to be a \cstar-diagonal if its
unitization $(\tilde A, \tilde D)$ satisfies Conditions (I) and (II).

Let us say that an inclusion $(A,D)$ (where $A$ is not assumed unital)
\textit{satisfies Kumjian's
conditions} if both (I) and (II) hold.

In the unital case, the two notions for \cstar-diagonal just described
are known to be equivalent, but we do not know of a reference where
this fact is established.  For convenience, we therefore outline a
proof in the first part of Proposition~\ref{NondegK} below.

When $A$ is not unital, it is unclear whether an inclusion $(A,D)$
satisfying Kumjian's conditions must be a \cstar-diagonal in the sense
that $(\tilde A, \tilde D)$ satisfies Kumjian's conditions or a
\cstar-diagonal in the
sense that $(A,D)$ is a Cartan inclusion with
the extension property.
The next result shows there is no ambiguity:  if $(A,D)$ satisfies
Kumjian's conditions, then $(\tilde A,
\tilde D)$ satisfies Kumjian's conditions and $(A,D)$ is a Cartan inclusion with
the extension property.

\begin{proposition}\label{NondegK}  Let $(A,D)$ be an inclusion.
  \begin{description}
    \item[\sc The Unital Case]  If $A$ is unital, then $(A,D)$ satisfies
      Kumjian's conditions if and only if $(A,D)$ is a Cartan
      inclusion and every pure state on $D$ uniquely extends to a
      state on $A$.
  
\item[\sc The Non-Unital Case]
      Suppose  $A$ is not unital.  The following are equivalent.
  \begin{enumerate}
  \item $(A,D)$ satisfies Kumjian's conditions.
    \item $(A,D)$ is a Cartan inclusion such that every pure state of
      $D$ has a unique extension to a  state on $A$.
      \item $(A,D)$ is a Cartan inclusion such that every pure state of
      $D$ has a unique extension to a  state on $A$ and no pure
      state of $A$ annihilates $D$.
\item $(\tilde A, \tilde D)$ is a Cartan inclusion such that every
  pure state of $\tilde D$ extends uniquely to a  state on $\tilde A$.
          \item $(\tilde A,\tilde D)$ satisfies Kumjian's conditions.
          \end{enumerate}
        \end{description}
      \end{proposition}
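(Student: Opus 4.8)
The plan is to prove the Unital Case first and then reduce the Non‑Unital Case to it by passing to the unitization, using the Unital Case together with Corollaries~\ref{dynamics}, \ref{regmap} and~\ref{Nfree}, Fact~\ref{thefact}, and Proposition~\ref{posn}.

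\emph{The Unital Case, forward direction.} Assume $(A,D)$ satisfies Kumjian's conditions, with $P\colon A\to D$ the faithful conditional expectation. Since $P$ is unital, $1_A=P(1_A)\in D$; since $D\cup N_f(A,D)\subseteq N(A,D)$ and $A=D\oplus\ker P$ with $\ker P=\ol{\spn N_f(A,D)}$, the inclusion is regular. Given a pure state $\sigma$ of $D$, the state $\sigma\circ P$ extends $\sigma$, and it is the only extension: if $\rho$ is another, then $\rho(n)=0$ for all $n\in N_f(A,D)$ by Corollary~\ref{Nfree}, while applying that corollary to $\tau\circ P$ for each pure $\tau$ shows $P(n)=0$; hence $\rho$ and $\sigma\circ P$ agree on $N_f(A,D)$, so on $\ker P$, so on $A$. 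A one-line argument shows $\sigma\circ P$ is pure. Finally $D$ is maximal abelian: if $a$ commutes with $D$, put $x=a-P(a)\in\ker P$; in the GNS representation $(\pi,\sH,\xi)$ of $\rho=\sigma\circ P$ with $\sigma$ pure, irreducibility of $\pi$ and uniqueness of the extension force the $\sigma$-eigenspace of $\pi(D)$ to be $\C\xi$ (a unit eigenvector $\eta\perp\xi$ would give a second extension $\langle\pi(\cdot)\eta,\eta\rangle=\rho$, contradicting irreducibility), so $\pi(x)\xi\in\C\xi$; as $\langle\pi(x)\xi,\xi\rangle=\sigma(P(x))=0$, we get $\pi(x)\xi=0$, hence $\sigma(P(x^*x))=0$ for every pure $\sigma$, hence $P(x^*x)=0$ and $x=0$ by faithfulness. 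Thus $(A,D)$ is a Cartan inclusion (Definition~\ref{Cdef}) with the unique pure state extension property.

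\emph{The Unital Case, converse.} Let $(A,D)$ be Cartan with faithful conditional expectation $P$ and unique pure state extensions. Condition~(I) is given, and since $(\id-P)(\spn N(A,D))$ is dense in $\ker P$, Condition~(II) reduces to showing $n-P(n)\in\ol{\spn N_f(A,D)}$ for each $n\in N(A,D)$. This is the step that genuinely uses the extension property. First, using Fact~\ref{thefact}, the identity $hn=n\theta_n(h)$ and the partial homeomorphism $\beta_n\colon\dom n\to\ran n$ of Corollary~\ref{dynamics}, one shows $\rho(n)=0$ whenever $\rho|_D$ is a pure state lying outside $\fix\beta_n$; hence $P(n)$ is supported on $\ol{\fix\beta_n}$. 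Then a localization/partition-of-unity argument on the locally compact Hausdorff space $\hat D$ finishes the job: one approximates $n$ by $en$ for a suitable $0\le e\le1$ in $D$ and splits $e=f_0+\sum_{i=1}^k f_i$ so that $\supp f_0$ is a small neighbourhood of $\fix\beta_n$ (on which $f_0n-P(n)$ is small, since near fixed points $n$ behaves like multiplication by $P(n)$) and each $f_i n$ is a \emph{free} normalizer, which holds because $(f_i n)^2=f_i\,\theta_{n^*}(|n|f_i)\,n$ and this vanishes once $\supp f_i$ and $\beta_n(\supp f_i\cap\dom n)$ are disjoint --- a separation achievable away from $\fix\beta_n$ by continuity of $\beta_n$ and the Hausdorff property. (Equivalently, this is transparent from the twisted-groupoid model of a Cartan inclusion with the extension property; cf.\ \cite{KumjianOnC*Di,RenaultCaSuC*Al}.) I expect this step to be the main obstacle.

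\emph{The Non-Unital Case.} Assume $A$ is not unital. I will establish $(1)\Rightarrow(2)\Rightarrow(3)\Rightarrow(4)\Rightarrow(1)$ together with $(4)\Leftrightarrow(5)$. The equivalence $(4)\Leftrightarrow(5)$ is exactly the Unital Case applied to the unital inclusion $(\tilde A,\tilde D)$. For $(1)\Rightarrow(2)$ one repeats the forward half of the Unital Case, whose use of the unit was inessential, obtaining that $(A,D)$ is Cartan with the unique pure state extension property. For $(2)\Rightarrow(3)$: a state $\phi$ with $\phi|_D=0$ would have $\phi(n^*n)=0$ for every $n\in N(A,D)$ (since $n^*n\in D$ by Proposition~\ref{posn} and the MASA hypothesis), hence $\phi(n)=0$ by Cauchy--Schwarz in $\tilde A$, hence $\phi=0$ by regularity --- impossible; so no state of $A$ annihilates $D$. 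For $(2)\Rightarrow(4)$ (hence $(3)\Rightarrow(4)$): check that $\tilde D$ is a MASA in $\tilde A$, that $(\tilde A,\tilde D)$ is regular (Corollary~\ref{regmap}(1) gives $\iota(N(A,D))\subseteq N(\tilde A,\tilde D)$, and $1\in\tilde D$), and that $\tilde P(a+\lambda1):=P(a)+\lambda1$ is a faithful conditional expectation; every pure state of $\tilde D$ is either the canonical extension of some $\sigma\in\hat D$ or the character $\ast$ annihilating $D$, and its unique extension to $\tilde A$ is identified --- using the unique extension of $\sigma$ from $(2)$ in the first case and the quotient map $\tilde A\to\tilde A/A$ in the second --- with uniqueness again coming from $|\omega(\iota(n))|^2\le\omega(\iota(n^*n))$. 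Finally $(4)\Rightarrow(1)$: letting $\tilde P$ witness that $(\tilde A,\tilde D)$ is Cartan, the character $\ast$ of $\tilde D$ annihilating $D$ has the two a priori distinct extensions $\ast\circ\tilde P$ and the quotient map $\tilde A\to\tilde A/A\cong\C$, which coincide by $(4)$; this forces $\tilde P(A)\subseteq D$, so $\tilde P$ restricts to a faithful conditional expectation $P\colon A\to D$ with $\ker\tilde P=\iota(\ker P)$; and inspecting definitions (using $D'\cap A=D$, which follows from $\tilde D'\cap\tilde A=\tilde D$) gives $N_f(\tilde A,\tilde D)=\iota(N_f(A,D))$, so Condition~(II) for $(\tilde A,\tilde D)$ --- valid since $(4)\Leftrightarrow(5)$ --- descends to Condition~(II) for $(A,D)$. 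This closes the cycle.
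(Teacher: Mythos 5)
Your global strategy (unital case first, then reduction through the unitization) is the paper's, and most of your sub-arguments are sound; several differ locally from the paper's and are fine or even cleaner. Your GNS/eigenspace proof that $D$ is maximal abelian, used in the unital forward direction and again for $(1)\Rightarrow(2)$, replaces the paper's argument (which shows $D$ is an essential ideal in any MASA $D_1\supseteq D$ via faithfulness of $P$); your Cauchy--Schwarz proof of $(2)\Rightarrow(3)$ replaces an appeal to Theorem~\ref{au}; your direct verification of the extension property for $(\tilde A,\tilde D)$ replaces a citation of Archbold--Bunce--Gregson; and your observation in $(4)\Rightarrow(1)$ that the unique state extension of the character of $\tilde D$ annihilating $D$ must equal the quotient map $\tilde A\to\tilde A/A$, forcing $\tilde P(A)\subseteq D$, is a tidy alternative to the paper's computation with $\ker\tilde P$. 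These all check out.

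The genuine gap is the one you flag: in the unital converse, the assertion that ``near fixed points of $\beta_n$, $n$ behaves like multiplication by $P(n)$'' is precisely the nontrivial content of the extension property, not a formal consequence of what you have set up. It fails for general Cartan inclusions: for $D=C(\bbT)$ inside $C(\bbT)\rtimes\bbZ_2$ (flip action), the implementing unitary $u$ has $P(u)=0$ and $\beta_u$ has two fixed points, near which $u$ is nowhere close to an element of $D$. The paper avoids the issue by first showing $n-P(n)\in N(A,D)$ (this needs $nP(n^*),\,n^*P(n)\in D$, i.e.\ \cite[Proposition~3.10]{DonsigPittsCoSyBoIs} --- an ingredient your sketch also omits and which is needed to make any estimate involving $f_0n-P(n)$ converge), and then invoking \cite[Lemma~9$^\circ$]{KumjianOnC*Di} or \cite[Proposition~3.12]{DonsigPittsCoSyBoIs}: for $n\in N(A,D)\cap\ker P$ the extension property forces $\beta_n(\sigma)\neq\sigma$ for \emph{every} $\sigma\in\dom n$, so the fixed-point set is empty and the partition of unity needs no exceptional piece $f_0$. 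To fill your version you would essentially reprove that lemma, e.g.\ by pushing your own eigenspace argument one step further: if $\beta_n(\sigma)=\sigma$ and $\rho$ is the unique extension of $\sigma$ with GNS triple $(\pi,\sH,\xi)$, then $\pi(n)\xi$ lies in the one-dimensional $\sigma$-eigenspace $\bbC\xi$, whence $\pi(n)\xi=\rho(n)\xi=\sigma(P(n))\xi=\pi(P(n))\xi$ and $\sigma\bigl(P((n-P(n))^*(n-P(n)))\bigr)=0$; this identity is the precise meaning of ``$n\approx P(n)$ at fixed points'' that your $f_0$-estimate requires. As written, that step is an assertion rather than a proof, and it is the heart of the unital converse.
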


\begin{proof}
  \sc The Unital Case. \rm Suppose $A$ is unital.

  It follows from~\cite[Proposition~4$^\circ$]{KumjianOnC*Di} that if
  $(A,D)$ satisfies Kumjian's conditions, then it is a Cartan
  inclusion and every pure state on $D$ uniquely extends to a state on
  $A$.  (Throughout~\cite{KumjianOnC*Di}, Kumjian makes the blanket
  assumption that all \cstaralg s are separable, but his proof
  of~\cite[Proposition~4$^\circ$]{KumjianOnC*Di} is also valid when
  $A$ is not separable.)

  Now suppose $(A,D)$ is a Cartan inclusion such that every pure state
  on $D$ extends uniquely to a state on $A$.  Let $ P$ be the
  conditional expectation of $ A$ onto $ D$ and let $n\in N( A, D)$.
  An application of~\cite[Proposition~3.10]{DonsigPittsCoSyBoIs} (with
  $x=I$) shows
  that $ \{n P(n^*), n^*P(n)\} \subseteq D$.   A computation now
  yields $n-
  P(n)\in N(A,D)$.
Since $(A,D)$ is a regular inclusion and $ P$ is contractive, to show that
  $\spn N_f( A,  D)$ is dense in $\ker  P$, it is
   enough to show that  $N( A, D)\cap \ker
  P\subseteq  \overline{\spn} N_f( A, D)$.

  Fix $n\in N( A, D)\cap \ker P$.  For
  $\sigma\in \dom n$,~\cite[Proposition~3.12]{DonsigPittsCoSyBoIs}
  or~\cite[Lemma~9$^\circ$]{KumjianOnC*Di} gives 
\[\beta_n(\sigma)\neq \sigma.\]  To show
$n\in \overline{\spn} N_f(A,D)$ use a partition
of unity argument.  Here are some of the details.

Let $X$ be the (compact) set of all pure states on $ D$ and for
$\eps >0$, let $X_\eps:=\{\sigma\in X: \sigma(n^*n)\geq \eps^2\}$.
For  $\sigma\in X_\eps$, the fact that
$\beta_n(\sigma)\neq \sigma$ implies that we may find
$a_\sigma\in \overline{nn^*D}$ such that:  $\sigma(a_\sigma)\neq 0$,
$0\leq a_\sigma\leq 1$, and $a_\sigma\theta_n(a_\sigma)=0$.  Using 
compactness of $X_\eps$,  select a finite subset
$\{b_j\}_{j=1}^k\subseteq \{a_\sigma: \sigma\in X_\eps\}$ such that
the Gelfand transform of $b:=\sum_{j=1}^k b_j$ does not vanish on $X_\eps$.  If
$m=\min\{\sigma(b): \sigma\in X_\eps\}$ and
$Z:=\{\sigma\in X: \sigma(b)\leq m/2\}$, then
$X_\eps \cap Z=\emptyset$.  If $Z\neq \emptyset$, use
Urysohn's lemma to find $h\in D$ with $0\leq h\leq 1$ such that
$\hat h|_Z=0$ and $\hat h|_{X_\eps}=1$;  if $Z=\emptyset$, let $h=1$.  For $1\leq j\leq k$ and
$\sigma\in X$, let
  \[f_j(\sigma)=\begin{cases} \sigma(h) \displaystyle
      \frac{\sigma(b_j)}{\sigma(b)} & \sigma\notin Z \\ 0& \sigma\in
      Z.\end{cases}\] Then $f_j\in C(X)$, so there exists $d_j\in D$
  with $\hat d_j=f_j$.  We have produced a collection
  $\{d_j\}_{j=1}^k\subseteq \overline{nn^*D}$ such that for each $j$,
  $0\leq d_j \leq 1$ and $d_j\theta_n(d_j)=0$.
Hence $d_jn\in N_f(A,D)$ for   $1\leq j\leq
k$. 
  Furthermore, 
  $d:=\sum_{j=1}^k d_j$ satisfies
  $0\leq d\leq 1$ and $\sigma(d)=1$ for each $\sigma\in X_\eps$.
 Then
\[\norm{dn-n}^2=\sup_{\sigma\in X} \sigma((d-I)^2nn^*)
  =\sup_{\sigma\in (X\setminus X_\eps)} \sigma((d-I)^2nn^*)
  <\eps^2.\]  As $dn=\sum_{j=1}^k d_j n\in \spn N_f(A,D)$, we conclude
$n\in \overline{\spn}N_f(A,D)$.
Thus  the
unital case holds.

\vskip 6pt

\noindent \sc The Non-Unital Case. \rm
For the remainder of the proof,  assume $A$ is not unital. 

(1) $\Rightarrow$ (2).
Since $A=D+\ker P$, $(A,D)$ is regular.  Corollary~\ref{Nfree} and
condition (II) imply that every pure state on $D$ extends uniquely to
a (necessarily pure) state on $A$.  We cannot immediately conclude
that $D$ is a MASA in $A$ because of the possibility that there is a
pure state on $A$ which annihilates $D$.  Condition (I) is key to
showing $D$ is a MASA.  \dclaimnn $D$ is a MASA in $A$.  \enddclaimnn
\dproof Let $D_1\subseteq A$ be a MASA containing $D$ and let
$S:=\{\rho\in\hat D_1: \rho|_D\neq 0\}$.  Suppose $a\in D_1$ and
$\rho(a)=0$ for every $\rho\in\hat D_1\setminus S$.  The fact that
every pure state of $D$ extends uniquely to a pure state on $D_1$
implies that for every $\rho\in S$, $\rho(a)=\rho(P(a))$, and clearly
if $\rho\in \hat D_1\setminus S$, $\rho(a)=0=\rho(P(a))$. 
Therefore, $a=P(a)$, that is, $a\in D$.  It follows that
\[D=\{a\in D_1: \rho(a)=0 \text{ for all } \rho\in \hat D_1\setminus
  S\},\] so $D$ is an ideal in $D_1$.  If $0\leq h\in D_1$ and
$hd=0$ for all $d\in D$, then $0\leq P(h)^2 =P(hP(h))=0$.  Thus
$P(h)=0$, so $h=0$ by faithfulness of $P$.  It follows that $D$ is an
essential ideal in $D_1$.

Fix $a\in D_1$.   Then for any $d\in D$,
$(a-P(a))d=ad-P(ad)=0$.   As $D$ is an essential ideal,  $a=P(a)$.
Therefore $D=D_1$,  so $D$ is a MASA in $A$.
\enddproof

Thus $(A,D)$ is a Cartan inclusion such that each pure state of $D$ extends
uniquely to a pure state of $A$.

(2) $\Rightarrow$ (3).  Let $(u_\lambda)$ be an approximate unit for
$D$.  Theorem~\ref{au} ensures that $(u_\lambda)$ is an approximate
unit for $A$.  Thus, for any pure state $\rho$ on $A$, 
$\lim_\lambda \rho(u_\lambda)=1$, so (3) holds.

(3) $\Rightarrow$ (4).   
It is readily seen that the map
  $\tilde P: \tilde A\rightarrow \tilde D$ given by
  $\tilde P(a,\lambda)=(P(a),\lambda)$ is  a faithful conditional
  expectation.   That $(\tilde A, \tilde D)$ is regular follows from 
  Corollary~\ref{regmap},
  and~\cite[Remarks~2.6(iii)]{ArchboldBunceGregsonExStC*AlII}  gives
  the extension property for $(\tilde A, \tilde D)$, so  (4) holds.

(4) $\Rightarrow$ (5).  This follows from the unital case.

(5) $\Rightarrow$ (1).  Let $\tilde P: \tilde A\rightarrow \tilde D$
be the faithful conditional expectation.  As in the proof of (1)
implies (2) above,
Corollary~\ref{Nfree} and Condition (II) show that
$(\tilde A, \tilde D)$ has the extension property
(\cite[Proposition~4$^\circ$]{KumjianOnC*Di} also gives this).
Therefore $(\tilde A, \tilde D)$ is a MASA inclusion.  Hence $(A, D)$
is also a MASA inclusion.

If $v:=(a,\lambda)\in N_f(\tilde A,\tilde D)$,
 then $\lambda=0$ and $a\in N_f(A,D)$.  This shows
 $N_f(\tilde A,\tilde D)\subseteq \iota(N_f(A,D))$.  
 Part (1) of Corollary~\ref{regmap} gives $\iota(N_f(A,D))\subseteq N_f(\tilde A,
 \tilde D)$.  Thus,  
 $N_f(\tilde A, \tilde D)=\iota(N_f(A,D))$, whence 
  \[\iota(\overline{\spn} N_f(A,D))=\overline{\spn} N_f(\tilde
A,\tilde D)=\ker \tilde P.\]
Since 
\[\tilde A= \ker\tilde P + \iota(D) + \bbC\, (0,1)\] and   $\ker \tilde
P\subseteq \iota(A)$,  $\tilde P$ leaves $\iota(A)$ invariant. 
Therefore, $P:=\iota^{-1}\circ \tilde
P\circ \iota$ is a
faithful conditional expectation of $A$ onto $D$.  Since $\iota(\ker
P)=\ker \tilde P=\iota(\overline{\spn} N_f(A,D))$, we obtain $\ker
P=\overline\spn N_f(A,D)$.   Thus $(A,D)$ satisfies Kumjian's
conditions and the proof is complete.
  
\end{proof}

Proposition~\ref{NondegK} shows the following definition of
\cstar-diagonal is equivalent to Kumjian's original definition,
regardless of whether $A$ is unital.
\begin{definition} An inclusion $(A,D)$ is a \textit{\cstar-diagonal}
  if it satisfies Kumjian's conditions, or equivalently, if $(A,D)$  is a
  Cartan inclusion for which every pure state of $D$ uniquely extends
  to a state on $A$.
\end{definition}
\section{Examples}

  The purpose of this section is to give a variety of examples.  
We begin with an example concerning the unitization of an inclusion.

Let $(A,D)$ be an inclusion with $A$
non-unital. Proposition~\ref{NondegK} shows $(A,D)$ is a
\cstar-diagonal if and only if its unitization $(\tilde A, \tilde D)$
is a \cstar-diagonal.  This is easily seen to be true for the class of
MASA inclusions, but as we shall see momentarily, it is not true for the class of Cartan
inclusions.  While a routine argument (sketched in the proof of
Proposition~\ref{CiffUC}) shows that
$(\tilde A,\tilde D)$ is a Cartan inclusion whenever $(A,D)$ is
Cartan, our first example shows the converse fails: it is possible for the
unitization of a non-regular inclusion $(A,D)$ to be a Cartan
inclusion.

\begin{example}\label{UnitalCartan}   Let $S$ be the unilateral shift,
  let $B=C^*(S)$ be the Toeplitz algebra, let
  $D_1=C^*(\{I\}\cup \{S^n S^{*n}: n\in \bbN\})$, and let $\K$ be the
  compact operators.  Then $(B,D_1)$ is a Cartan inclusion.

  Identify $B/\K$ with $C(\bbT)$, and let $q:B\rightarrow
  C(\bbT)$ be the quotient map.  Fix $\omega\in \bbT$ and for $b\in
  B$, let $\tau_\omega(b)=q(b)(\omega)$.  Then $\tau_\omega$ is a
  multiplicative linear functional on $B$.  Let
  \[A=\ker\tau_\omega\dstext{and} D= D_1\cap A.\] The map
  $\tilde A\ni (x,\lambda)\mapsto x+\lambda I\in B$ is an isomorphism
  which carries $\tilde D$ onto $D_1$.   Thus $(\tilde A,\tilde D)$ is a
  Cartan inclusion (and in particular a MASA inclusion).  Since $D\subseteq \K$ and $A$
  contains the non-compact operator $S-\omega I$, $(A,D)$ does not
  have the approximate unit property.   Theorem~\ref{au}
  implies $(A,D)$ cannot be a regular inclusion, so $(A,D)$ is not a
  Cartan inclusion.

\end{example}
We now note that the behavior displayed in
Example~\ref{UnitalCartan} cannot occur when $(A,D)$ is regular.

\begin{proposition}\label{CiffUC}  Suppose $(A,D)$ is a regular
  inclusion, with $A$ not unital.  Then $(A,D)$ is a Cartan inclusion if and only
  if $(\tilde A,\tilde D)$ is a Cartan inclusion.
\end{proposition}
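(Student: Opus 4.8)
The plan is to prove both implications separately, using Theorem~\ref{au} and Corollary~\ref{regmap} as the main tools, and checking that the conditional-expectation data transfers correctly between $(A,D)$ and $(\tilde A,\tilde D)$. The regularity hypothesis is what makes the converse work here, in contrast to Example~\ref{UnitalCartan}.

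First I would handle the forward direction: assume $(A,D)$ is a Cartan inclusion. By definition $(A,D)$ is a regular MASA inclusion with a faithful conditional expectation $P:A\to D$. Since $(A,D)$ is a regular MASA inclusion, Theorem~\ref{au} gives that every approximate unit for $D$ is an approximate unit for $A$; in particular $(A,D)$ has the approximate unit property, so $D\subseteq \bh$ is non-degenerate and the usual comparisons of multiplier algebras apply. I would then invoke Corollary~\ref{regmap}(1) to see $\iota:(A,D)\to(\tilde A,\tilde D)$ is a regular $*$-monomorphism, and a routine argument (as in the proof of Proposition~\ref{NondegK}, step (3)$\Rightarrow$(4)) shows $\tilde D$ is a MASA in $\tilde A$: if $(a,\lambda)\in \tilde A$ commutes with $\tilde D$, then $a$ commutes with $D$, so $a\in D$ by maximality. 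To see $(\tilde A,\tilde D)$ is regular, note $\tilde A=\iota(A)+\bbC I$, and since $\iota(N(A,D))\subseteq N(\tilde A,\tilde D)$ and $I\in N(\tilde A,\tilde D)$, the span of normalizers is dense. Finally $\tilde P(a,\lambda):=(P(a),\lambda)$ is a faithful conditional expectation onto $\tilde D$: faithfulness follows from faithfulness of $P$ together with the fact that $(A,D)$ has the approximate unit property, exactly as recorded in step (3)$\Rightarrow$(4) of Proposition~\ref{NondegK}. Hence $(\tilde A,\tilde D)$ is Cartan.

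Next I would handle the converse: assume $(\tilde A,\tilde D)$ is a Cartan inclusion, so there is a faithful conditional expectation $\tilde P:\tilde A\to\tilde D$. The key point is that $(A,D)$ is \emph{assumed} regular, so I must only produce a faithful conditional expectation $P:A\to D$ and verify $D$ is a MASA in $A$. For the MASA property: if $a\in A$ commutes with $D$, then since $\tilde D=\tilde A\cap D'$ would need checking, but more directly, $a\in\tilde A$ commutes with $\tilde D$ (as $a$ commutes with both $D$ and the scalars), so $a\in\tilde D\cap \iota(A)=\iota(D)$. For the conditional expectation, I would show $\tilde P$ restricts to $\iota(A)$: writing $\tilde P(a,0)=(P_0(a),\mu(a))$ for a map $P_0:A\to D$ and a functional $\mu:A\to\bbC$, one argues $\mu=0$. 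This is where regularity enters crucially: for $n\in N(A,D)$, Proposition~\ref{posn} gives $n^*n\in D$ (since $D$ is now known to be a MASA in $A$), hence $\iota(n)^*\iota(n)\in\iota(D)\subseteq\iota(A)$; writing $\iota(n)=\iota(n)\cdot$ (approximate unit) and using that $\ker\tilde P$ is spanned by products of normalizers lying in $\iota(A)$ — or more simply, that $\tilde P$ is $\tilde D$-bimodular and $\iota(A)$ is an ideal of $\tilde A$ — forces $\tilde P(\iota(A))\subseteq\iota(A)$. Then $P:=\iota^{-1}\circ\tilde P\circ\iota$ is a faithful conditional expectation of $A$ onto $D$, as at the end of the proof of Proposition~\ref{NondegK}.

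The main obstacle I anticipate is the converse direction, specifically verifying that $\tilde P$ leaves $\iota(A)$ invariant so that it restricts to a conditional expectation $A\to D$; one must rule out the $\tilde P$-image of an element of $\iota(A)$ having a nonzero scalar component. The cleanest route is to exploit that $\iota(A)$ is an ideal in $\tilde A$ together with $\tilde D$-bimodularity of $\tilde P$: for $a\in A$ and an approximate unit $(u_\lambda)$ for $D$, Theorem~\ref{au} applied to $(\tilde A,\tilde D)$ — no, rather applied after one establishes regularity of $(\tilde A,\tilde D)$, which follows from regularity of $(A,D)$ as above — gives $u_\lambda\to I$ strictly on $\tilde A$, whence $\tilde P(\iota(a))=\lim\tilde P(u_\lambda\iota(a))=\lim u_\lambda\tilde P(\iota(a))$, and since $u_\lambda\in\iota(D)\subseteq\iota(A)$ and $\iota(A)$ is closed, $\tilde P(\iota(a))\in\iota(A)$. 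Everything else is bookkeeping of the sort already carried out in Proposition~\ref{NondegK}.
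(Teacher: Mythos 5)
Your proposal is correct and takes essentially the same route as the paper: the forward direction extends $P$ to $\tilde P(x,\lambda)=(P(x),\lambda)$ and checks faithfulness, and the converse uses the approximate-unit trick $\tilde P(\iota(a))=\lim \iota(u_\lambda)\tilde P(\iota(a))$ together with the fact that $\iota(A)$ is a closed ideal to see that $\tilde P$ leaves $\iota(A)$ invariant. One small correction: Theorem~\ref{au} should be applied to $(A,D)$ itself (which is a regular MASA inclusion once $\tilde D$ maximal abelian forces $D$ maximal abelian in $A$), not to $(\tilde A,\tilde D)$, since $(u_\lambda)\subseteq D$ is not an approximate unit for the unital algebra $\tilde D$; this is exactly what yields $u_\lambda\iota(a)\to\iota(a)$.
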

\begin{proof} We again use $\iota: A\rightarrow \tilde A$ for
  the map $x\mapsto (x,0)$.  Suppose $(\tilde A,\tilde D)$ is a Cartan
  inclusion.  Then $(A,D)$ is a regular MASA inclusion, so  
  Theorem~\ref{au} ensures $D$ contains an approximate unit
  $(u_\lambda)$ for $A$.  Let $\tilde P:\tilde A\rightarrow \tilde D$
  be the conditional expectation.  For $x\in A$,
  \[\tilde P(\iota(x))=\lim \tilde P(\iota(u_\lambda x))=\lim \iota(u_\lambda)\tilde
    P(\iota(x)).\]  Thus $\iota(A)$ is invariant
  under $\tilde P$, and, as $\iota$ is one-to-one, 
  $P:=\iota^{-1} \circ \tilde P \circ \iota$ is a conditional
  expectation of $A$ onto $D$.  As $\tilde P$ is faithful, so is $P$, whence
  $(A,D)$ is a Cartan inclusion.

  We sketch the converse.  Suppose $(A,D)$ is a Cartan inclusion with
  conditional expectation $P: A\rightarrow D$. Then
  $(\tilde A,\tilde D)$ is a regular MASA inclusion.  Define
  $\tilde P: \tilde A\rightarrow \tilde D$ by
  $\tilde P(x,\lambda)=(P(x),\lambda)$.  For
  $(x,\lambda)\in \tilde A$, the fact that $P(x^*x)\geq P(x)^*P(x)$
  gives
  \[P((x,\lambda)^*(x,\lambda))\geq (P(x)^*P(x)+ \overline\lambda x+\lambda
    x^*, |\lambda|^2) =\tilde P(x,\lambda)^*\tilde P(x,\lambda)\geq
    0.\] Then $\tilde P$ is a faithful conditional
  expectation, so 
  $(\tilde A,\tilde D)$ is a Cartan inclusion. 
 
\end{proof}

Next we
  give examples of singular MASA
  inclusions, some of which have the approximate unit property, while
  others do not.  Corollary~\ref{intermediate} is our key tool for
  constructing non-regular inclusions with the approximate unit
  property: we make appropriate choices of subalgebras intermediate to
  a regular MASA inclusion.  Theorem~\ref{nointermed} gives an example
  of a MASA inclusion which is not intermediate to a regular MASA
  inclusion.    These results lead us to pose a number of questions.

  If $(A,D)$ is a MASA inclusion and $B$ is an intermediate
  \cstar-subalgebra,  $D\subseteq B\subseteq A$, regularity
  properties of $(B,D)$ cannot be deduced from regularity of $(A,D)$.
  Indeed, $(B,D)$ may be
  \begin{enumerate}
    \item[(a)] regular: take $D$ to be the $n\times n$ diagonal
      matrices and $D\subseteq B\subseteq M_n(\bbC)$;
      \item[(b)] singular: take $D$
  to be a non-atomic MASA in $\bh$, let $A=\overline\spn N(\bh, D)$ and
  $B=D+\sK$,
  see~\cite[Corollary~3.9 and Proposition~2.9]{ExelPittsZarikianExIdReFrTrGr}; or
  \item[(c)] 
  something peculiar: 
  \cite[Example~5.1]{BrownExelFullerPittsReznikoffInC*AlCaEm} gives
  an example of a Cartan pair $(A,D)$ and an intermediate non-regular
  \cstar-subalgebra $B$ having full support in the Renault twist
  $\Sigma\rightarrow G$ associated with $(A,D)$.
\end{enumerate}
In each of the latter two 
  cases, $(B,D)$ is a non-regular MASA inclusion with the
  approximate unit property.

  Before continuing, we give a method for constructing
  singular MASA inclusions.  Interestingly, the proof in the
  non-unital case uses Corollary~\ref{regmap}.  We use the notation
  $J\idealin B$ to indicate $J$ is a norm-closed, two-sided ideal in
  $B$.
\begin{lemma}\label{singular}
Let $(A,D)$ be an inclusion with $D$ a MASA in $A$.   If $J\idealin A$
satisfies $J\cap D=\{0\}$, then $(D+J, D)$ is a singular inclusion.
\end{lemma}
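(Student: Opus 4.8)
The plan is to prove directly that $N(D+J,D)=D$; since $D$ is abelian the inclusion $D\subseteq N(D+J,D)$ is automatic, so I only need the reverse inclusion. I would begin with two preliminaries. As $J$ is a closed two‑sided ideal of $A$, one has $D+J=\pi^{-1}(\pi(D))$ for the quotient map $\pi\colon A\to A/J$, and $\pi(D)$ is a $C^*$-subalgebra of $A/J$; hence $D+J$ is a $C^*$-subalgebra of $A$ and $(D+J,D)$ is indeed an inclusion. Moreover $D$ is a MASA in $D+J$, because any element of $D+J\subseteq A$ commuting with $D$ lies in $D'\cap A=D$. In particular Proposition~\ref{posn} applies to the inclusion $(D+J,D)$.

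Next I would fix $n\in N(D+J,D)$ and write $n=d+j$ with $d\in D$ and $j\in J$, and exploit the hypothesis $D\cap J=\{0\}$ to ``separate'' the two parts of $n$: any element of $D$ that differs from a second element by something lying in $J$ must equal it. This yields two identities. First, Proposition~\ref{posn} and the MASA property give $n^*n\in D$, while $n^*n-d^*d=d^*j+j^*d+j^*j\in J$; hence $n^*n=d^*d$. Second, for every $e\in D$ the normalizer condition gives $n^*en\in D$, while $n^*en-d^*ed=d^*ej+j^*ed+j^*ej\in J$; hence
\[ n^*en=d^*ed\qquad\text{for all }e\in D. \]

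To conclude I would show that $n$ commutes with $D$, which forces $n\in D'\cap(D+J)=D$. Fixing $a\in D$, set $c:=na-an$; since $d$ commutes with $a$, in fact $c=ja-aj\in J$. Expanding
\[ c^*c=a^*n^*na-a^*n^*an-n^*a^*na+n^*a^*an, \]
substituting $n^*n=d^*d$ in the first summand and the identity $n^*en=d^*ed$ with $e$ equal to $a$, to $a^*$, and to $a^*a$ in the other three, and using commutativity of $D$, each summand reduces to $(d^*d)(a^*a)$; hence $c^*c=0$, so $c=0$. Since $a\in D$ was arbitrary, $n$ commutes with $D$ and therefore $n\in D$.

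The one place calling for care is the computation of $c^*c$: the identity $n^*en=d^*ed$ is only available for $e\in D$, so the expansion has to be arranged so that it is invoked exactly at the elements $a$, $a^*$, $a^*a$ of $D$; once that is set up, the four terms cancel in one line. I note that this route uses neither approximate units nor a Hilbert‑space representation and treats the unital and non‑unital cases together; alternatively, one could reduce the non‑unital case to the unital one by passing to $\tilde A$ and using Corollary~\ref{regmap}(1) to see that every $n\in N(D+J,D)$ also belongs to $N(\tilde D+J,\tilde D)$.
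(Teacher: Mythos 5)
Your proof is correct, and it takes a genuinely different route from the paper's. The paper disposes of the unital case by citing \cite[Proposition~2.9]{ExelPittsZarikianExIdReFrTrGr} and then handles the non-unital case by passing to the unitization: it notes that $\tilde D$ is a MASA in $\widetilde{D+J}$ with $J\cap\tilde D=\{0\}$, applies Corollary~\ref{regmap}(1) to get $N(D+J,D)\subseteq N(\widetilde{D+J},\tilde D)=\tilde D$, and intersects with $D+J$ --- essentially the alternative you sketch in your last sentence. Your main argument is instead self-contained modulo Proposition~\ref{posn}: the separation trick ($x-y\in J$, $x,y\in D$, $J\cap D=\{0\}$ forces $x=y$) yields $n^*n=d^*d$ (here Proposition~\ref{posn} plus the MASA property is genuinely needed to place $n^*n$ in $D$) and $n^*en=d^*ed$ for $e\in D$, and the four-term expansion of $(na-an)^*(na-an)$ then collapses to $0$ using exactly those identities at $e=a$, $a^*$, $a^*a$; I checked that each summand reduces to $d^*da^*a$, so the cancellation is valid, and $n\in D'\cap A=D$ follows. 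What your route buys is uniformity (no case split on unitality) and independence from the external reference for the unital case; what the paper's route buys is brevity, at the cost of routing through Corollary~\ref{regmap} and the cited proposition. One small presentational point: it is worth stating explicitly that $n^*n$, $d^*d$, and their difference all lie in $D$ before invoking $J\cap D=\{0\}$, since the difference lying in $J$ alone proves nothing.
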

\begin{proof}
For notational purposes, let $B=D+J$. 
The case when $B$ is unital is
\cite[Proposition~2.9]{ExelPittsZarikianExIdReFrTrGr}.

Now suppose $B$ is not
unital.   Then
$\tilde D$ is a MASA in $\tilde B$, $J\idealin \tilde B$, and $J\cap
\tilde D=J\cap D=\{0\}$.      
As $D$ is a MASA in $B$, Corollary~\ref{regmap} and the unital case give
\[N(B,D)\subseteq N(\tilde B,
\tilde D)=\tilde D.\] 
Thus, $N(B,D)\subseteq B\cap \tilde D=D$, so $(B,D)$ is a singular inclusion.

\end{proof}

\begin{example}\label{C+K}
  Here is a class of  examples of  singular MASA inclusions having the
  approximate unit property.  Let $\sH$ be a separable infinite
  dimensional Hilbert space, and suppose $D\subseteq \bh$ is a
  \cstaralg\ whose double commutant is a non-atomic MASA in $\bh$.
  Letting $\sK\subseteq \bh$ be the compact operators, note that
  $D\cap \sK\subseteq D''\cap \sK=\{0\}$.  Set $A=D+\sK$.  Then $D$ is
  a MASA in $A$ and Lemma~\ref{singular} shows it is singular in $A$.
  If $(u_\lambda)$ is an approximate unit for $D$, then $\text{\sc
    sot}\lim u_\lambda=I$, so for every $K\in \sK$, the
  nets $(u_\lambda K)$ and $(Ku_\lambda)$ norm-converge to $K$.  It
  follows that $(u_\lambda)$ is an approximate unit for $A$.
\end{example}

We next present our example of a singular MASA inclusion $(A,D)$, with
$A$ separable, such that no approximate unit for $D$ is an approximate
unit for $A$.  While the approximate unit property also fails for the MASA
inclusion $(A_0,C_0)$ found in
\cite[Section~3.2]{WassermannTePrMaAbSuC*Al}, that example differs
significantly from ours: $C_0$ is generated by minimal
projections, the expectation of $A_0$ onto $C_0$ is faithful,  and
$C_0$ is not singular in $A_0$.

\begin{example}  \label{big C+K}
  Given any \cstaralg\ $\mathfrak A$, $\ell^\infty(\mathfrak A)$ will
  denote the \cstaralg\ of all bounded sequences in $\mathfrak A$
  (with the usual pointwise operations and supremum norm).
  \providecommand{\V}{\mathcal V}
  
  Let $\sH=L^2[0,1]$ (Lebesgue measure) and fix a  set of vectors
  \[\{\xi_j: j\in\bbN\}\subseteq \sH\setminus\{0\} \dstext{such that}
  \overline{\{\xi_j\}}_{j\in\bbN}=\sH.\]
  For each $j\in \bbN$, let $p_j\in\bh$ be the rank-one projection onto $\bbC
  \xi_j$.  Notice that for any $x\in\bh$,
  \[\norm{x}=\sup_j \norm{xp_j}=\sup_j\norm{p_jx}.\]
  Let $M$ be the collection of all multiplication operators,
  \[\sH\ni \xi\mapsto f\xi, \dstext{where} f\in C_0(0,1),\] and let $p$ be the
  projection in $\ell^\infty(\sK)$ whose $j$th term is $p_j$,
  that is,
  \[p: j\mapsto p_j.\]

  We now describe the inclusion.  Take $D$  to be the set of all constant
sequences in $\ell^\infty(M)$, and define
\[A=C^*(\{p\}\cup D)\subseteq \ell^\infty(M+\sK).\]
$A$ is separable because $D\simeq M$.
\dclaimnn
$D$ is a singular MASA in $A$.
\enddclaimnn
\dproof
 Since $M\cap \sK=(0)$, the map $\Phi:
M+\sK\rightarrow M$ given by $\Phi(m+k)=m$ (where $m\in M$ and $k\in \sK$),
is a well-defined  $*$-homomorphism.   Next, let $\Delta:
\ell^\infty(M+\sK)\rightarrow \ell^\infty(M)$ be the $*$-epimorphism given by
\[(\Delta(x))(j)=\Phi(x(j))\quad j\in \bbN.\]

Let us show that $\Delta(A)=D$.  Since $\Delta|_D=\text{id}|_D$, it
suffices to show $\Delta(A)\subseteq D$.  To do this, let $X$ be the
collection of all finite products where each factor is taken from
$\{p\}\cup D$ and at least one of the factors is $p$.  Let
\[Y=\spn X\subseteq A,\] and note that for $y\in Y$ and $j\in \bbN$,
$y(j)\in \sK$.  The definitions of $\Delta$  and $A$ show that
$Y\subseteq \ker \Delta$ and $D+Y$ is dense in $A$.  Therefore, given
$a\in A$ and $\eps>0$, we may find $d\in D$ and $y\in Y$ so that
$\norm{a-(d+y)}<\eps$.  As $\Delta(d+y)=d$, we obtain $\norm{\Delta(a)
  -d}<\eps$, showing that $\Delta(a)$ can be approximated as closely
as desired by an element of $D$.  Therefore, $\Delta(a)\in D$.  

Now suppose $a\in A$ commutes with $D$.  Then for each $j\in\bbN$, $a(j)$
commutes with $M$.  As $M$ is a
MASA in $M+\sK$, $a(j)\in M$.  This gives $a=\Delta(a)\in D$.   Therefore,
$D$ is a MASA in $A$. 

Since $\Delta|_A$ is a homomorphism, $J:=\ker(\Delta|_A)$ is an ideal
of $A$ satisfying $D\cap J=(0)$.  Also $D+J=A$ (because for $a\in A$,
$a=\Delta(a)+(a-\Delta(a)$).  An application of Lemma~\ref{singular}
shows $(A,D)$ is a singular MASA inclusion.  
\enddproof

Now suppose $(u_\lambda)$ is an approximate unit for $D$.  Let
$v_\lambda =u_\lambda(1)\in M$ (recall elements of $D$ are constant
sequences in $M$).  Note that $\norm{v_\lambda -I_\sH}=1$ because
$v_\lambda$ is the multiplication operator determined by an element
$f_\lambda\in C_0(0,1)$ with $0\leq f_\lambda$ and
$\norm{f_\lambda}\leq 1$.  Fixing $\lambda$, we find
\[
  \norm{pu_\lambda-p}=\sup_j\norm{p_jv_\lambda-p_j}=\sup_j\norm{p_j(v_\lambda
    -I_\sH)}=\norm{v_\lambda-I_\sH}=1.\]  Therefore,
$(u_\lambda)$ is not an approximate unit for $A$.
\end{example}

Examples~\ref{C+K} and~\ref{big C+K} and the 
unpredictable behaviour of regularity for intermediate inclusions (see
the discussion 
between~Example~\ref{UnitalCartan} and Lemma~\ref{singular}) motivate
the following.
\begin{flexstate}{Question}{} \label{aup}
  Suppose $D\subseteq A$ is a MASA inclusion.  Under
  what circumstances is there a \cstaralg\ $B$ with $D\subseteq
  A\subseteq B$ such that $D$ is a regular MASA in $B$?  
\end{flexstate}

Corollary~\ref{intermediate} shows the approximate unit property is a
necessary condition for $(A,D)$ to be an intermediate inclusion
arising from a regular MASA inclusion $(B,D)$.  In particular, the
inclusion of Example~\ref{big C+K} fails to be such an intermediate
inclusion.  The inclusion described in item (b) at the start of the
present section is intermediate to a regular inclusion, so some
choices for $D$ in Example~\ref{C+K} yield singular inclusions
intermediate to a regular MASA inclusion.  We do not know whether
every inclusion described in Example~\ref{C+K} is intermediate to a
regular MASA inclusion, however for such an inclusion $(D+\sK,D)$,
Proposition~\ref{posn} suggests that
$\overline{\spn} \{nd: n\in N(\bh, D), \, d\in D\}$ might be a
plausible candidate for the regular algebra $B$.

There are MASA inclusions $(A,D)$ with $A$ unital for which $A$ cannot
be an intermediate subalgebra of a regular MASA inclusion $(B,D)$.  In
fact, it can happen that $A$ contains no MASA $D$ with this property.
The following result gives an example.

\begin{flexstate}{Theorem}{}\label{nointermed}
Let $\sH$ be a separable, infinite dimensional Hilbert space and
 suppose $D$ is a MASA in $\bh$.  There is
 no regular MASA inclusion $(B,D)$ with $D\subseteq \bh \subseteq B$.
\end{flexstate}
\begin{proof}
  Let $P\in D$ be  the strong operator topology sum of the minimal projections of
 $D$.

 Suppose first that $P<I$. Since $PD$ is an atomic MASA in
 $\mathcal B(P\sH)$, there is a unique (and faithful) conditional
 expectation $E_P: \mathcal B(P\sH)\rightarrow PD$.  Also, $P^\perp D$
 is a non-atomic MASA acting on $\mathcal B(P^\perp \sH)$.  By
 \cite[Theorem~2]{KadisonSingerExPuSt} there are multiple conditional
 expectations of $\mathcal B(P^\perp\sH)$ onto $P^\perp D$.  If
 $E_{P^\perp}$ is any conditional expectation of
 $\mathcal B(P^\perp\sH)$ onto $P^\perp D$, then
 $\bh \ni x \mapsto E_P(PxP)+ E_{P^\perp}(P^\perp xP^\perp)$ is a
 conditional expectation of $\bh$ onto $D$.  Therefore, there are
 multiple conditional expectations of $\bh$ onto $D$.  Since $D$ is an
 abelian von Neumann algebra, it is injective, and hence each
 conditional expectation of $\bh$ onto $D$ is a pseudo-expectation,
 see~\cite[Definition~1.3]{PittsStReInI}.  Since every regular MASA
 inclusion has a unique pseudo-expectation
 \cite[Theorem~3.5]{PittsStReInI} and the unique pseudo-expectation
 property is hereditary from
 above~\cite[Proposition~2.6]{PittsZarikianUnPsExC*In} it follows that
 $\bh$ is not an intermediate algebra for a regular MASA inclusion
 $(B,D)$.

 The case $P=I$ must be handled differently because in this case,
 there is a unique and faithful conditional expectation
 $E:\bh\rightarrow D$.  The proof will be accomplished in several
 steps.  Before embarking, note that we may assume $\sH=\ell^2(\bbN)$
 and that $D$ is the collection of all operators diagonal with respect
 to an orthonormal basis $\{\zeta_j\}_{j\in \bbN}$ for $\ell^2(\bbN)$.
Also, for non-zero vectors $\eta_1, \eta_2\in \sH$, we denote the rank-one
operator $\sH\ni \xi\mapsto \innerprod{\xi, \eta_2}\eta_1$ by
$\eta_1\eta_2^*$.

\dclaim $(\bh, D)$ is not regular.\footnote{Claim 1 was also
  established using very different methods by Katavolos and Paulsen
  in~\cite[Proposition~19]{KatavolosPaulsenOnRaBiPr}; we learned of
  their argument after finding the proof presented here.}  \enddclaim
\dproof
  By
 \cite[Example~3.10]{PittsZarikianUnPsExC*In}, 
$(\bh/\sK,  D/(D\cap \sK))$ is a MASA inclusion with a unique
pseudo-expectation, which is actually a conditional expectation
$\Delta: \bh/\sK\rightarrow D/(D\cap \sK)$.  As noted in
\cite[Example~3.10]{PittsZarikianUnPsExC*In}, $\Delta$ is not faithful.

Suppose now that $(\bh, D)$ is regular.  Then $(\bh/\sK, D/(D\cap\sK))$ is also
regular, so it is a regular MASA
inclusion.   By~\cite[Theorem~3.15]{PittsStReInI}, the left kernel of
$\Delta$ is an ideal  $\mathcal L\idealin\bh/\sK$.  But $\bh/\sK$ is simple,
so $\mathcal L=0$.   Hence $\Delta$ is faithful, yet as we have already
observed, it is not.  Thus $(\bh,D)$ is not regular. 
\enddproof

If $(B,D)$ is a regular MASA inclusion with
$D\subseteq \bh \subseteq B$, then there is a unique conditional
expectation of $B$ onto $D=I(D)$, \cite[Theorem~3.5]{PittsStReInI}.  The
point of the following is that we can reduce to the case where the
expectation of $B$ onto $D$ is faithful, that is, when $(B,D)$ is a \cstar-diagonal.
\dclaim Suppose $(B,D)$ is a
regular MASA inclusion with $D\subseteq \bh\subseteq B$.  Then there
exists a \cstar-diagonal $(C,D)$ with $D\subseteq \bh \subseteq C$.
\enddclaim
\dproof As $(B,D)$ is a regular MASA inclusion and $D$ is
injective, the unique pseudo-expectation $\tilde E$ for $(B,D)$
(see~\cite[Theorem~3.5]{PittsStReInI}) is a conditional expectation
whose restriction to $\bh$ is $E$.  We simplify notation and write
$E: B\rightarrow D$ instead of using $\tilde E$.

Since $D$ is an injective
\cstaralg,~\cite[Theorem~2.21]{PittsStReInI} shows that every pure state on $D$
extends uniquely to a pure state of $B$.   Let $\sL(B,D):=\{b\in B:
E(b^*b)=0\}$.  By~\cite[Theorem~3.15]{PittsStReInI},
$\sL(B,D)$ is
an ideal of $B$ having trivial intersection with $D$.  Then
$\sL(B,D)\cap \bh$ is an ideal in $\bh$ also having
trivial intersection with $D$.  But $\sL(B,D)\cap \bh\in \{\sK, (0)\}$,  so as
$\sK\cap D\neq (0)$, we conclude 
$\sL(B,D)\cap \bh=(0).$

Let $C=B/\sL(B,D)$ and let $q: B\rightarrow C$  be the quotient map.
By~\cite[Theorem~4.8]{DonsigPittsCoSyBoIs}, $(C,q(D))$ is a \cstar-diagonal.
Since $\sL(B,D)\cap \bh=(0)$, we may identify $(\bh,D)$ with
$(q(\bh),q(D))$, and doing so, we obtain $D\subseteq \bh\subseteq C$.  
\enddproof

\dclaim  Suppose $(B,D)$ is a \cstar-diagonal with $D\subseteq
\bh\subseteq B$.  Then $\sK$ is an essential ideal in $B$.
\enddclaim
\dproof
Let $n\in N(B,D)$ be non-zero and fix $j\in \bbN$ satisfying
$n\zeta_j\zeta_j^*\neq 0$.  We aim to show that
$v:=n\zeta_j\zeta_j^*$ belongs to $\bh$.  As
$v\in N(B,D)$, Corollary~\ref{dynamics}
shows there is a unique
$*$-isomorphism
$\theta_v: \overline{vv^*D}\rightarrow \overline{v^*v D}$ extending
the map $vv^*D\ni vv^*h\mapsto v^*hv$; further, for every
$h\in \overline{vv^*D}$, $v\theta_v(h)=hv$.

Since $v^*v=n^*n\zeta_j\zeta_j$, we see that $\overline{v^*v D} = v^*v
D=\bbC \zeta_j\zeta_j^*$.  Hence $\overline{vv^*D}$ is also
one-dimensional, so there exists some $k\in
\bbN$ so that
\[\overline{vv^*D}=vv^*D=\bbC \zeta_k\zeta_k^*.\]   This implies that
$\theta_v(\zeta_k\zeta_k^*)=\zeta_j\zeta_j^*$, hence
$v\theta_v(\zeta_k\zeta_k^*)=\zeta_k\zeta_k^*v$, that is,
\[
  v\zeta_j\zeta_j^*= \zeta_k\zeta_k^* v.\]

Now set $u=\zeta_k\zeta_j^*$.  Since
$u\zeta_j\zeta_j^*=\zeta_k\zeta_k^*u$, it follows that $u^*v$ commutes
with $D$.   As $D$ is a MASA in $B$, $u^*v\in D$.    A computation
shows $u^*vv^*u\neq 0$, so $u^*v\neq 0$.  Hence there is $0\neq c\in \bbC$
such that $u^*v=c\zeta_j\zeta_j^*$.  Then
\[u^*vv^*=c\zeta_j\zeta_j^*n^*.\] Taking adjoints and dividing by $c$
gives $n\zeta_j\zeta_j^*= c^{-1}vv^*\zeta_k\zeta_j^*$ showing that
$n\zeta_j\zeta_j^*$ is a multiple of a rank-one partial
isometry  in $N(\bh, D)$.

This holds for every $j\in \bbN$, so it follows that for any choice of
$i, j\in \bbN$,
\[n \zeta_i\zeta_j^*=n (\zeta_i\zeta_i^*)(\zeta_i\zeta_j^*)\in \sK.\]
As $\spn\{\zeta_i\zeta_j^*: i, j\in \bbN\}$ is dense in $\sK$, we see
that $n\sK\subseteq \sK$.   Replacing $n$ with $n^*$ in the arguments
above implies $\sK n\subseteq \sK$, so
\[\sK n \cup  n \sK\subseteq \sK.\]
Since $D$ is regular in $B$, we conclude that $\sK\idealin B$.  

Suppose $J\idealin B$ and $J\cap \sK=(0)$.  Arguing as in the second
paragraph of  the proof of Claim~2, $J\cap \bh =(0)$.  Let
$E:B\rightarrow D$ be the conditional expectation.  Taking $v=I$
in~\cite[Proposition~3.10]{DonsigPittsCoSyBoIs} we obtain
\[E(J)\subseteq J\cap D\subseteq J\cap \bh=(0).\]  The faithfulness of $E$ now gives $J=(0)$.
Thus $\sK\subseteq B$ is an essential ideal.
\enddproof

We now complete the proof of the part of Theorem~\ref{nointermed} assuming the sum
of the atoms of $D$ is the identity.  Arguing by contradiction,
suppose  $(B,D)$ is  a regular
MASA inclusion such that $D\subseteq \bh\subseteq B$.  Apply Claim~2
to obtain a \cstar-diagonal $(C,D)$ with $D\subseteq \bh\subseteq C$.
By Claim~3, $\sK$ is an essential ideal of $C$, so every element of
$C$ is an element of the multiplier algebra of $\sK$, that is,
$C\subseteq \bh$, so $C=\bh$.  Therefore, the inclusion $(\bh, D)=(C,D)$ is
regular, 
contradicting  Claim~1.

\end{proof}

It follows from the proof of Theorem~\ref{nointermed} that whenever
$D$ is a MASA in $\bh$, then $(\bh,D)$ is not regular.  
 It would be desirable to exhibit
an operator $T$ which does not belong to the norm
closure of the span of $N(\bh, D)$.

 When $(A,D)$ is a MASA inclusion with $A$ non-unital, some desirable properties of
$(A,D)$ (e.g. regularity or the unique state extension property) 
pass to $(\tilde A, \tilde D)$.  Using the notation and context of
part (2) of Corollary~\ref{regmap}, we now observe that 
this need not be the case when $(\tilde A,\tilde D)$ is replaced
with $(B, D_B)$, even when the original inclusion $(A,D)$ is
well-behaved.  Here is an example.

\begin{example}\label{essential}  
  Let $\sH=\ell^2(\bbN)$ and let $D$ be the set of all multiplication
  operators
  $\ell^2(\bbN)\ni (\zeta_n)_{n\in\bbN}\mapsto
  (d_n\zeta_n)_{n\in\bbN}$, where $(d_n)_{n\in \bbN}\in c_0$.  Then
  $(\sK, D)$ is a $C^*$-diagonal.

 We consider three  choices of a 
 unital algebra $B$ containing $\sK$ as an essential ideal.
 \begin{enumerate}
   \item Take $B=\tilde \sK$.  With this choice,  
$(B,D_B)=(\tilde \sK,\tilde D)$ is  a \cstar-diagonal.
\item Let $S$ be the unilateral shift
and let $B=C^*(S)$ be the Toeplitz algebra.  Then 
 $D_B=C^*(\{S^nS^{*n}: n\in
\bbN\}\cup \{I\})\simeq \tilde D$.
 Here $(B, D_B)$ is a Cartan pair, but not a \cstar-diagonal  because the unique
  state extension property fails.
\item 
Finally, take $B=M(\sK)=\bh$. Then 
$D_B\simeq \ell^\infty$
is an atomic MASA in $B$.  
By  Theorem~\ref{nointermed},
  the inclusion $(B,D_B)$ is not regular.  As is now well-known, the remarkable
paper~\cite{MarcusSpielmanSrivastavaInFaIIMiChPoKaSiPr} gives the
solution to the Kadison-Singer problem, so $(B, D_B)$ has 
the extension property.
\end{enumerate}
\end{example}

\begin{flexstate}{Problem}{}\label{gKS} Suppose $(A,D)$ is a
  \cstar-diagonal and  $B$ is a unital \cstaralg\ containing $A$ as an
  essential ideal.    Find conditions on $(A,D)$ and $B$ which ensure
  that 
  $(B, D_B)$ is regular, or that $(B,D_B)$ has the extension property.   
\end{flexstate}

Let $X$ be a locally compact, non-compact abelian group, fix
$x_0\in X$, let $\Gamma$ be the subgroup generated by $x_0$, and let
$\Gamma$ act on $C_0(X)$ by translation.    A class of \cstar-diagonals
which may
provide insight into Problem~\ref{gKS} are those of the form
$(C_0(X)\rtimes\Gamma, C_0(X))$.  

\def\cprime{$'$}
\providecommand{\bysame}{\leavevmode\hbox to3em{\hrulefill}\thinspace}
\providecommand{\MR}{\relax\ifhmode\unskip\space\fi MR }
\providecommand{\MRhref}[2]{%
  \href{http://www.ams.org/mathscinet-getitem?mr=#1}{#2}
}
\providecommand{\href}[2]{#2}


\begin{thebibliography}{10}

\bibitem{ArchboldBunceGregsonExStC*AlII}
R.~J. Archbold, J.~W. Bunce, and K.~D. Gregson, \emph{Extensions of states of
  {$C\sp{\ast} $}-algebras. {II}}, Proc. Roy. Soc. Edinburgh Sect. A
  \textbf{92} (1982), no.~1-2, 113--122. \MR{84a:46134}

\bibitem{BrownFullerPittsReznikoffGrC*AlTwGpC*Al}
Jon~H. Brown, Adam~H. Fuller, David~R. Pitts, and Sarah~A. Reznikoff,
  \emph{Graded {$C^*$}-algebras and twisted groupoid {$C^*$}-algebras}, New
  York J. Math. \textbf{27} (2021), 205--252.

\bibitem{BrownExelFullerPittsReznikoffInC*AlCaEm}
Jonathan~H. Brown, Ruy Exel, Adam~H. Fuller, David~R. Pitts, and Sarah~A.
  Reznikoff, \emph{Intermediate {$C^*$}-algebras of {C}artan embeddings}, Proc.
  Amer. Math. Soc. Ser. B \textbf{8} (2021), 27--41. \MR{4199728}

\bibitem{DonsigPittsCoSyBoIs}
Allan~P. Donsig and David~R. Pitts, \emph{Coordinate systems and bounded
  isomorphisms}, J. Operator Theory \textbf{59} (2008), no.~2, 359--416.
  \MR{2411050}

\bibitem{ExelPittsChGrC*AlNoHaEtGr}
Ruy Exel and David~R. Pitts, \emph{Characterizing groupoid {$C^*$}-algebras of
  non-{H}ausdorff \'etale groupoids}, arXiv:1901.09683, September 2019.

\bibitem{ExelPittsZarikianExIdReFrTrGr}
Ruy Exel, David~R. Pitts, and Vrej Zarikian, \emph{Exotic ideals in represented
  free transformation groups}, arXiv:2109.06293, 2021.

\bibitem{KadisonSingerExPuSt}
Richard~V. Kadison and I.~M. Singer, \emph{Extensions of pure states}, Amer. J.
  Math. \textbf{81} (1959), 383--400. \MR{23 \#A1243}

\bibitem{KatavolosPaulsenOnRaBiPr}
Aristides Katavolos and Vern~I. Paulsen, \emph{On the ranges of bimodule
  projections}, Canad. Math. Bull. \textbf{48} (2005), no.~1, 97--111.
  \MR{2118767}

\bibitem{KumjianOnC*Di}
Alexander Kumjian, \emph{On {$C\sp \ast$}-diagonals}, Canad. J. Math.
  \textbf{38} (1986), no.~4, 969--1008. \MR{88a:46060}

\bibitem{MarcusSpielmanSrivastavaInFaIIMiChPoKaSiPr}
Adam~W. Marcus, Daniel~A. Spielman, and Nikhil Srivastava, \emph{Interlacing
  families {II}: {M}ixed characteristic polynomials and the {K}adison-{S}inger
  problem}, Ann. of Math. (2) \textbf{182} (2015), no.~1, 327--350.
  \MR{3374963}

\bibitem{PittsStReInI}
David~R. Pitts, \emph{Structure for regular inclusions. {I}}, J. Operator
  Theory \textbf{78} (2017), no.~2, 357--416. \MR{3725511}

\bibitem{PittsZarikianUnPsExC*In}
David~R. Pitts and Vrej Zarikian, \emph{Unique pseudo-expectations for
  {$C^*$}-inclusions}, Illinois J. Math. \textbf{59} (2015), no.~2, 449--483.
  \MR{3499520}

\bibitem{RenaultTwApDuGrC*Al}
Jean Renault, \emph{Two applications of the dual groupoid of a {$C\sp
  \ast$}-algebra}, Operator algebras and their connections with topology and
  ergodic theory (Bu\c steni, 1983), Lecture Notes in Math., vol. 1132,
  Springer, Berlin, 1985, pp.~434--445. \MR{87a:46111}

\bibitem{RenaultCaSuC*Al}
\bysame, \emph{Cartan subalgebras in {$C\sp *$}-algebras}, Irish Math. Soc.
  Bull. (2008), no.~61, 29--63. \MR{MR2460017}

\bibitem{WassermannTePrMaAbSuC*Al}
Simon Wassermann, \emph{Tensor products of maximal abelian subalgebras of
  {$C^\ast$}-algebras}, Glasg. Math. J. \textbf{50} (2008), no.~2, 209--216.
  \MR{2417616}

\end{thebibliography}
\end{document}